\renewcommand{\leq}{\leqslant}
\renewcommand{\le}{\leqslant}
\renewcommand{\geq}{\geqslant}
\theoremstyle{plain}
\newtheorem{theorem}{Theorem}[section]
\newtheorem{corollary}[theorem]{Corollary}
\newtheorem{lemma}[theorem]{Lemma}
\newtheorem{proposition}[theorem]{Proposition}
\theoremstyle{definition}
\numberwithin{equation}{section}
\newcommand{\R}{\mathbb{R}}
\newcommand{\e}{\varepsilon}
\newcommand{\abs}[1]{\left\lvert#1\right\rvert}
\newcommand{\norm}[1]{\|{#1}\|}
\newcommand{\seminorm}[1]{\left[#1\right]}
\begin{document}
	
\title[Energy  asymptotics  of  a  Dirichlet  to  Neumann  problem]{Energy  asymptotics  of  a  Dirichlet  to  Neumann  problem  related to water waves}

\author[Pietro Miraglio]{Pietro Miraglio}

\address{P.M., Dipartimento di Matematica, Universit\`a di Milano, Via Cesare Saldini 50,
	20133 Milan, Italy, Departament de Matem\`{a}tica Aplicada I,
	Universitat Polit\`{e}cnica de Catalunya, Diagonal 647, 08028 Barcelona, Spain}
\email{pietro.miraglio@unimi.it}

\author[Enrico Valdinoci]{Enrico Valdinoci}
\address{E.V., Department of Mathematics and Statistics,
	University of Western Australia,
	35 Stirling Highway, WA6009 Crawley, Australia}
\email{enrico.valdinoci@uwa.edu.au}

\thanks{P.M. is supported by the MINECO grant MTM2017-84214-C2-1-P and is part of the Catalan research group 2017SGR1392. 
E.V. is supported by the Australian Research Council Discovery
Project DP170104880 NEW ``Nonlocal Equations at Work''.
The authors are members of INdAM-GNAMPA.
Part of this work was carried out on the occasion of a very pleasant visit of the first author to the University of Western Australia, which we thank for the warm hospitality.
}

\begin{abstract}
	We consider a Dirichlet to Neumann operator $\mathcal{L}_a$ arising in a model for water waves, with a nonlocal parameter $a\in(-1,1)$. We deduce the expression of the operator in terms of the Fourier transform, highlighting a local behavior for small frequencies and a nonlocal behavior for large frequencies. 
	
	We further investigate the $ \Gamma $-convergence of the energy associated to the equation $ \mathcal{L}_a(u)=W'(u) $, where $W$ is a double-well potential. When $a\in(-1,0]$ the energy $\Gamma$-converges to the classical perimeter, while for $a\in(0,1)$ the $\Gamma$-limit is a new nonlocal operator, that in dimension $n=1$ interpolates the classical and the nonlocal perimeter.
\end{abstract}

\maketitle

\section{Introduction}
In this article, we consider a possibly singular or degenerate elliptic problem with weights, which is
set on the infinite domain~$\R^n\times(0,1)$, endowed with mixed boundary conditions. When~$n=2$, such a problem is related to the formation of water waves from a steady ocean, the case of homogeneous density of the fluid corresponding to a Laplace equation in~$\R^2\times(0,1)$ with mixed boundary conditions, and the weighted equation arising from power-like fluid densities.

We provide here two types of results. The first set of results 
focuses on the operator acting on~$\R^n\times\{0\}$
produced by the associated Dirichlet to Neumann problem.
That is, we consider the weighted Neumann derivative
of the solution along the portion of the boundary that is endowed with a Dirichlet datum, which corresponds, in the homogeneous fluid case, to the determination of
the vertical velocity field on the surface of the ocean. In this setting, we
provide an explicit expression of this Dirichlet to Neumann operator in terms of the Fourier representation, and we
describe the asymptotics of the corresponding Fourier symbols. 

The second set of results deals with the energy functional
associated to the Dirichlet to Neumann operator. Namely,
we consider an energy built by the combination
of a suitably weighted interaction functional of Dirichlet to Neumann type in the Fourier space with a double-well potential.
In this setting, choosing the parameters in order to produce significant asymptotic
structures, we describe the $\Gamma$-limit configuration.
\medskip

The results obtained are new even in the case~$n=2$
and even for the Laplace equation. Interestingly, however,
the fluid density plays a decisive role as a bifurcation parameter, and the case of uniform density is exactly the threshold separating two structurally different behaviors. Therefore, understanding
the ``more general'' case of variable densities also provides structural
information on the homogeneous setting. Specifically, we prove convergence
of the energy functional to a $\Gamma$-limit corresponding to a mere interaction energy when~$a\in(0,1)$ and to the classical perimeter when~$a\in(-1,0]$. In terms of the corresponding fractional parameter~$s=\frac{1-a}2$, this dichotomy reflects a purely nonlocal behavior when~$s\in(0,1/2)$
and a purely classical asymptotics when~$s\in[1/2,1)$. Interestingly,
the threshold~$s=1/2$ corresponds here to the homogeneous density case,
the strongly nonlocal regime corresponds to degenerate densities~$y^a$ with~$a>0$,
and the weakly nonlocal regime to singular densities~$y^a$ with~$a<0$.\medskip

We also point out that the threshold~$s=1/2$
that we obtain here, as well as the limit behavior for
the regime~$s\in[1/2,1)$, is common to other
nonlocal problems,
such as the ones in~\cite{SV, SVdue, SOUG}. On the other hand, the limit functional
that we obtain in the strongly nonlocal regime~$s\in(0,1/2)$ appears to be new in the literature, and structurally different from other energy functionals of
nonlocal type that have been widely investigated.
\medskip

The precise mathematical formulation
of the problem under consideration is the following.	We consider the slab $\R^n\times[0,1]$ with coordinates $x\in\R^n$ and $y\in[0,1]$, a smooth bounded function $u:\R^n\to\R$, and its bounded
	extension $v$ in the slab~$\R^n\times [0,1]$, which is the bounded function satisfying the mixed boundary value
	problem
	\begin{align}
	\label{lvsis}
	\begin{cases}
	\mathrm{div}(y^a\nabla v)=0 \qquad &\text{in}\,\,\R^n\times (0,1)\\
	v_y(x,1)=0 \qquad &\text{on}\,\,\R^n\times \{y=1\}\\
	v(x,0)=u(x) \qquad &\text{on}\,\,\R^n\times \{y=0\},
	\end{cases}
	\end{align}
	where $a\in (-1,1)$.
	Problem~\eqref{lvsis}
	naturally leads to the study of the 
	Dirichlet to Neumann operator $\mathcal{L}_a$ defined as
	\begin{equation}
	\label{operator}
	\mathcal{L}_a u(x)=-\displaystyle\lim_{y\rightarrow 0}y^av_y(x,y).
	\end{equation}
	
	The operator $ \mathcal{L}_a $, which is the main object of the present work, arises in the study of a water wave model. With respect
	to the physical motivation, one can consider $ \R^n\times(0,1) $ as ``the sea'', where $\{y=0\}$ corresponds the surface of the sea (assumed to be at rest)
	and $\{y=1\}$ is its bottom (assumed to be made of concrete and impenetrable material). More specifically, the first equation in~\eqref{lvsis} models the mass conservation and the irrotationality of the fluid, and the second one is a consequence of the impenetrability of the matter. 
The scalar function~$v$ plays the role of a velocity potential, that is the gradient of~$v$
corresponds to the velocity of the fluid particles.
Given the datum of the velocity potential~$v$ on the surface --- i.e. the Dirichlet condition on $\{y=0\}$ in~\eqref{lvsis} --- we are interested in studying the weighted vertical velocity on the surface, which is responsible for the	formation of a wave emanating from the rest position of a ``flat sea". The operator $\mathcal{L}_a$ defined in~\eqref{operator} models indeed this vertical velocity.
	We refer to~\cite{DMV} for a complete description of this model
	and for detailed  physical motivations.\medskip
	 
We observe that the energy functional associated to~\eqref{lvsis} can be written as
	\[
	\mathcal{E}_K(v):=\frac12\int_{\R^n\times(0,1)}y^a\abs{\nabla v}^2\,dx\,dy.
	\]
	In what follows, we will consider the energy minimization
	in the class of functions 
	\begin{equation}\label{class}
	\begin{split}
	\mathcal{H}_u:=\{ w\in H^1_\text{loc}(\R^n\times(0,1),y^a)\,\,\text{s.t.}\,\,w(x,0)=u(x)
	\,\,
%	&\hspace{4.5cm}
	\text{for a.e.}\,\,x\in\R^n \}.
	\end{split}
	\end{equation}
	Such a minimizer exists and it is unique --- see Lemma \ref{lemma_extension} below for a detailed proof --- and we can define the interaction energy associated to $u$ as the interaction energy of its minimal extension~$v$. Namely, with a slight abuse of notation, we write
	\[
	\mathcal{E}_K(u):=\inf_{v\in\mathcal{H}_u}\mathcal{E}_K(v).
	\]
	Notice that the minimizer $v\in\mathcal{H}_u$ of the energy $\mathcal{E}_K$ solves the mixed boundary problem \eqref{lvsis} in the weak sense, i.e.
	\begin{equation}\label{eq_weak}
	\int_{\R^n\times(0,1)}y^a\nabla v\cdot\nabla \varphi =0 \qquad 
	\end{equation}
 	for every $\varphi\in C^\infty(\R^n\times[0,1])$ with compact support contained in $\R^n\times(0,1]$.
 	
We observe that, thanks to the existence of a  unique minimizer of the energy $\mathcal{E}_K$ in the class $\mathcal{H}_u$, the operator $\mathcal{L}_a$ is actually
well-defined. Indeed, among all the (possibly many) solutions to \eqref{lvsis}, we can uniquely choose the one which minimizes $\mathcal{E}_K$ in $\mathcal{H}_u$, and define $\mathcal{L}_au$ as its weighted vertical derivative evaluated at $y=0$, according to~\eqref{operator}.\medskip
	
	In the case $a=0$, which corresponds to
	$v$ being the harmonic extension of $u$ in $\R^n \times (0,1)$, the operator $\mathcal{L}_a$ defined in~\eqref{operator} was considered by de la Llave and the second author in~\cite{DllV}.	In particular, they studied the equation
	\begin{equation}\label{a_0_equation}
		\mathcal{L}_0(u)=f(u)\qquad\text{in}\,\,\R^n,
	\end{equation}
	where $f\in C^{1,\beta}(\R)$, and $\mathcal{L}_0$ is the operator defined in~\eqref{operator} with $a=0$. 	
	The main result in~\cite{DllV} is a Liouville theorem for monotone solutions to \eqref{a_0_equation}, which leads in dimension $n=2$ to the one-dimensional symmetry of monotone solutions. 
	
	Some years later, this Liouville theorem has been generalized by Cinti and the authors of this paper~\cite{CMV}
	to stable\footnote{We say that a solution $u$ to \eqref{a_equation} is stable if the second variation of the associated energy is nonnegative definite at $u$. We also remind that, for this kind of problems, monotone solutions are stable --- see \cite{CMV}. Clearly, minimizing solutions to \eqref{a_equation} are also stable.} solutions to
	\begin{equation}\label{a_equation}
	\mathcal{L}_a(u)=f(u)\qquad\text{in}\,\,\R^n,
	\end{equation}
	where $f\in C^{1,\beta}(\R)$ and $a\in(-1,1)$.
	More precisely, in~\cite{CMV} the rigidity of monotone and minimizing solutions to \eqref{a_equation} is obtained in the case $n=3$ for every $a\in(-1,1)$. This is done by
	combining the Liouville theorem for stable solutions with some new energy estimates for monotone and minimizing solutions to \eqref{a_equation}.
	
	The problem of proving one-dimensional symmetry of some special classes of solutions to \eqref{a_equation} is strictly related to a conjecture of De Giorgi for the classical Allen-Cahn equation, and also to an analogue conjecture for the fractional Laplacian. 
	These conjectures are also related to a classical question posed
	by Gary W. Gibbons which originated from cosmological
	problems.
	We refer to the recent survey~\cite{DMV} for more details about these connections and for an outline of the most important recent results in these fields.
	
	In \cite{DllV} the operator $\mathcal L_0$ is written via Fourier transform as
	\begin{equation}\label{fourier_a0}
	\mathcal{L}_0u=\mathcal{F}^{-1}\left(\frac{e^{\abs{\xi}}-e^{-\abs{\xi}}}{e^{\abs{\xi}}+e^{-\abs{\xi}}}\abs{\xi}\widehat{u}(\xi)\right),
	\end{equation}
	where $\widehat{u}$ denotes the Fourier transform of $u$ and $\mathcal{F}^{-1}$ the inverse Fourier transform.
	
	{F}rom expression \eqref{fourier_a0}, one can easily observe that for large frequencies the Fourier symbol of $\mathcal L_0$ is asymptotic to $\abs{\xi}$, which is the Fourier symbol of the half-Laplacian
	(hence, the high-frequency wave formation is related, at least
	asymptotically, to the operator~$\sqrt{-\Delta}$).\medskip 
		
	The first main result of the present paper
	extends \eqref{fourier_a0} to every $ a\in(-1,1) $, providing the Fourier representation of the operator $\mathcal{L}_a$ for every value of the parameter~$a$
	in terms of special functions of Bessel type.

	\begin{theorem}
	\label{thm_fourier}
	For every smooth bounded function $u$ defined on $\R^n$ which is integrable, we can write the operator~$ \mathcal{L}_a $ defined in~\eqref{operator} via Fourier transform, as
	\begin{equation}
	\label{anon0}
	\widehat{\mathcal{L}_au}(\xi)=c_1(s)\frac{J_{1-s}(-i\abs{\xi})}{J_{s-1}(-i\abs{\xi})}\abs{\xi}^{2s}\widehat{u}(\xi),
	%{\cos\left(s\pi\right) J_{1-s}(-i\abs{\xi})+\sin\left(s\pi\right) Y_{1-s}(-i\abs{\xi})}
	\end{equation}
	where $ 1-a=2s $, $J_k$ is the Bessel function of the first kind of order $k$, and
	\begin{equation}\label{c1}
	c_1(s):=i\left(\frac{1-i}{2}\right)^{4s-2}\frac{\Gamma(1-s)}{\Gamma(s)}.
	\end{equation}
	Moreover, the symbol
	\begin{equation}\label{S_s}
	S_s(\xi):= c_1(s)\frac{J_{1-s}(-i\abs{\xi})}{J_{s-1}(-i\abs{\xi})}\abs{\xi}^{2s}
	\end{equation}
	is a positive and increasing function of $ \abs{\xi} $, and enjoys the following asymptotic properties. There exist two positive constants $C_1$ and $C_2$ depending only on $s$ such that
	\begin{equation}\label{asympt}
	\begin{aligned}
	\lim_{\abs{\xi}\to0}\frac{S_s(\xi)}{\abs{\xi}^2}=C_1;
	\\
	\lim_{\abs{\xi}\to+\infty}\frac{S_s(\xi)}{\abs{\xi}^{2s}}=C_2.
%	\\
%	&S_s(\xi)\sim\abs{\xi}^{2s} \qquad &\text{as}\,\,\,\abs{\xi}\to+\infty;
%	\\
%	&S_s(\abs)\sim\abs{\xi}^2 \qquad &\text{as}\,\,\,\abs{\xi}\to0.
	\end{aligned}
	\end{equation}
\end{theorem}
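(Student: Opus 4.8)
The plan is to diagonalise the problem by the Fourier transform in the horizontal variable~$x$, reducing~\eqref{lvsis} to a one--parameter family of ordinary differential equations in~$y\in(0,1)$, to solve these explicitly in terms of modified Bessel functions, and to read off the symbol~$S_s$ from the weighted Neumann trace at~$y=0$. Writing~$\widehat v(\xi,y)$ for the Fourier transform in~$x$ of the minimal extension~$v(\cdot,y)$ supplied by Lemma~\ref{lemma_extension}, the equation $\mathrm{div}(y^a\nabla v)=0$ becomes, for each fixed~$\xi$,
\begin{equation*}
\widehat v_{yy}+\frac{a}{y}\,\widehat v_y-\abs{\xi}^2\,\widehat v=0\quad\text{in }(0,1),\qquad \widehat v(\xi,0)=\widehat u(\xi),\qquad \widehat v_y(\xi,1)=0,
\end{equation*}
while~$\widehat{\mathcal L_a u}(\xi)=-\lim_{y\to0}y^a\widehat v_y(\xi,y)$. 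The substitution~$\widehat v(\xi,y)=y^{(1-a)/2}g(y)=y^{s}g(y)$ turns the ODE into a rescaled modified Bessel equation of order~$s$; since~$s\in(0,1)$ is not an integer, $\{I_{s},I_{-s}\}$ is a fundamental system and~$\widehat v(\xi,y)=y^{s}\bigl(A(\xi)\,I_{s}(\abs{\xi}y)+B(\xi)\,I_{-s}(\abs{\xi}y)\bigr)$ for suitable~$A(\xi),B(\xi)$.

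Next I would fix~$A$ and~$B$ from the two boundary conditions. The small--argument asymptotics~$I_\nu(z)\sim(z/2)^\nu/\Gamma(\nu+1)$ give~$\lim_{y\to0}y^{s}I_{s}(\abs{\xi}y)=0$ and~$\lim_{y\to0}y^{s}I_{-s}(\abs{\xi}y)=2^{s}\abs{\xi}^{-s}/\Gamma(1-s)$, so the Dirichlet condition forces~$B(\xi)=\Gamma(1-s)\,2^{-s}\abs{\xi}^{s}\widehat u(\xi)$. Using the derivative identities~$\tfrac{d}{dy}[y^{s}I_{s}(\abs{\xi}y)]=\abs{\xi}\,y^{s}I_{s-1}(\abs{\xi}y)$ and~$\tfrac{d}{dy}[y^{s}I_{-s}(\abs{\xi}y)]=\abs{\xi}\,y^{s}I_{1-s}(\abs{\xi}y)$, the Neumann condition at~$y=1$ yields~$A(\xi)=-B(\xi)\,I_{1-s}(\abs{\xi})/I_{s-1}(\abs{\xi})$, which is meaningful because~$I_{s-1}(r)>0$ for~$r>0$. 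Substituting into~$\widehat{\mathcal L_a u}(\xi)=-\lim_{y\to0}y^a\widehat v_y(\xi,y)$, and exploiting the crucial identity~$a+s=1-s$ together with the small--argument asymptotics of~$I_{s-1}$ and~$I_{1-s}$ (only the~$I_{s-1}$--term survives the limit), one obtains
\begin{equation*}
\widehat{\mathcal L_a u}(\xi)=\frac{\Gamma(1-s)}{\Gamma(s)}\,2^{1-2s}\,\frac{I_{1-s}(\abs{\xi})}{I_{s-1}(\abs{\xi})}\,\abs{\xi}^{2s}\,\widehat u(\xi);
\end{equation*}
inserting the relation~$J_\nu(-iz)=(-i)^\nu I_\nu(z)$ between the two Bessel functions then converts the prefactor exactly into~$c_1(s)$ of~\eqref{c1}, which is~\eqref{anon0}.

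For the qualitative part, positivity of~$S_s$ is immediate from the $I$--representation, since~$I_\nu(r)>0$ for~$r>0$ and~$\nu>-1$ and~$\Gamma(s),\Gamma(1-s)>0$ on~$(0,1)$. For monotonicity I would use the variational description of the symbol: decomposing~$\mathcal E_K$ slice by slice through Plancherel and integrating by parts in~$y$ shows
\begin{equation*}
S_s(\xi)=\min\Bigl\{\int_0^1 y^a\bigl(\abs{\phi'(y)}^2+\abs{\xi}^2\abs{\phi(y)}^2\bigr)\,dy\ :\ \phi(0)=1\Bigr\},
\end{equation*}
and, as the admissible class does not depend on~$\abs{\xi}$ and the minimiser is not identically zero, testing this functional at a lower frequency with the minimiser relative to a higher one gives strict monotonicity in~$\abs{\xi}$. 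Finally~\eqref{asympt} follows by feeding into the $I$--representation the expansion~$I_\nu(r)\sim(r/2)^\nu/\Gamma(\nu+1)$ as~$r\to0$ — whence~$I_{1-s}(r)/I_{s-1}(r)\sim\frac{\Gamma(s)}{\Gamma(2-s)}(r/2)^{2-2s}$, so~$S_s(\xi)/\abs{\xi}^2$ tends to a positive limit~$C_1$ — and the expansion~$I_\nu(r)\sim e^{r}/\sqrt{2\pi r}$ as~$r\to+\infty$, which has the same leading order for every fixed~$\nu$, so~$I_{1-s}(r)/I_{s-1}(r)\to1$ and~$S_s(\xi)/\abs{\xi}^{2s}\to C_2=2^{1-2s}\Gamma(1-s)/\Gamma(s)>0$.

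The main obstacle I anticipate is twofold. First, the rigorous justification that the abstract energy minimiser of Lemma~\ref{lemma_extension} coincides with the function constructed above — so that taking the Fourier transform in~$x$, differentiating in~$y$, imposing the traces and passing to the limit~$y\to0$ are all legitimate — needs some care with the weighted Sobolev spaces and with the decay in~$\xi$ required to validate these manipulations. Second, although elementary, the bookkeeping of the fractional powers of~$i$ in passing from the modified Bessel functions~$I_\nu$ to~$J_\nu(-i\,\cdot\,)$, and in checking that the resulting constant is exactly~$c_1(s)$, is error--prone and should be carried out with care.
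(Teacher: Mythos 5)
Your proposal is correct, and it reaches \eqref{anon0} by the same basic mechanism as the paper: Fourier transform in $x$, explicit solution of the resulting ODE in $y$ by Bessel-type functions, determination of the two coefficients from the Dirichlet condition at $y=0$ and the Neumann condition at $y=1$, and evaluation of the weighted Neumann trace. Two of your choices, however, genuinely differ. First, you use the real-argument modified Bessel basis $y^sI_{s}(|\xi|y)$, $y^sI_{-s}(|\xi|y)$, whereas the paper works with $y^sJ_{-s}(-i|\xi|y)$ and $y^sY_{-s}(-i|\xi|y)$ and must invoke the relation between $Y_m$ and $J_{\pm m}$; your route keeps all quantities real until the final conversion $J_\nu(-iz)=(-i)^\nu I_\nu(z)$, and your prefactor $2^{1-2s}\Gamma(1-s)/\Gamma(s)$ does agree with $c_1(s)$ in \eqref{c1} after that conversion (it is also the constant $\mathcal{C}_s$ of Lemma \ref{lemma_difference}), while making positivity of $S_s$ immediate from $I_\nu>0$. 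Second, for monotonicity the paper differentiates $\widetilde S_s$ explicitly, using in effect the Bessel Wronskian to obtain \eqref{der_Stilde}, whereas you compare minima of the slice-wise functional $\phi\mapsto\int_0^1 y^a\bigl(|\phi'|^2+|\xi|^2\phi^2\bigr)\,dy$ over $\phi(0)=1$ at two frequencies; this is a valid alternative (the identification of $S_s(\xi)$ with that minimum follows by integrating by parts against your explicit solution, which satisfies the natural Neumann condition at $y=1$), and it trades the explicit derivative formula for a softer convexity argument whose variational identity you only sketch. The asymptotics \eqref{asympt} are handled essentially as in the paper via the small-argument expansion at $0$; at infinity you use $I_\nu(r)\sim e^r/\sqrt{2\pi r}$ where the paper uses boundedness of $\widetilde S_s$, and both yield $C_2=2^{1-2s}\Gamma(1-s)/\Gamma(s)$. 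The caveat you flag yourself, namely identifying the constructed extension with the energy-minimizing one of Lemma \ref{lemma_extension} so that the trace computations are legitimate, is left implicit in the paper as well, so it is not a gap relative to the paper's own argument.
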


	We remind that $\abs{\xi}^2$ is the Fourier symbol of the classical Laplacian and that the fractional Laplacian can be expressed for a smooth function~$u$ defined in~$\R^n$ as
	\[
	\left(-\Delta \right)^s u(x)= \mathcal{F}^{-1}\left(\abs{\xi}^{2s}\widehat{u}(\xi)	\right).
	\]
	As a consequence, from Theorem \ref{thm_fourier} we have that the operator $\mathcal{L}_a$ defined in~\eqref{operator} is somewhat asymptotically related to the fractional Laplacian, but it is not equal to any purely fractional operator.
In this spirit,	
the asymptotic behaviors in~\eqref{asympt}
reveal an important difference between the problem considered here
and several other fractional problems widely investigated in the literature. Namely, in light of~\eqref{asympt}, we have that for large frequencies
the Fourier symbol of the operator~$\mathcal{L}_a$ is asymptotic to the Fourier symbol of the fractional
Laplacian~$(-\Delta)^s$ with~$s=\frac{1-a}2$, but for small frequencies
it is always asymptotic to the Fourier symbol of the classical Laplacian, and this lack of homogeneity, combined with a significant
structural difference ``between zero and infinity'',
suggests a new and interesting interplay between local and nonlocal phenomena
at different scales.\medskip

{F}rom \eqref{anon0} we also deduce an alternative formulation of the Dirichlet energy~$\mathcal{E}_K$, that we state in the following result.

\begin{corollary}\label{cor_equivalence}
	Let $u$ be a smooth bounded function defined on $\R^n$ which is integrable, and $v$ the solution of~\eqref{lvsis} obtained as the unique minimizer of~$\mathcal{E}_K$ in the class~$\mathcal{H}_u$. 
	Then,
\begin{equation}\label{fourier_energy}
	\mathcal{E}_K(v)=\frac12 \int_{\R^n\times(0,1)}y^a\abs{\nabla v}^2\,dx\,dy = \frac1{2(2\pi)^n}\int_{\R^n} S_s(\xi)\abs{\widehat{u}(\xi)}^2\,d\xi,
\end{equation}
	where $ a=1-2s$ and $S_s(\xi)$ is defined in~\eqref{S_s}.
\end{corollary}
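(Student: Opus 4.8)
The plan is to exploit the fact that $v$ is the energy minimizer, hence a weak solution of~\eqref{lvsis}, and to integrate by parts so that the bulk Dirichlet energy collapses onto the boundary slice $\{y=0\}$. First I would justify, using the decay of $v$ in the $x$-variables (which follows from $u$ being integrable and smooth, so that $\widehat u \in L^1 \cap L^\infty$ and $v$ inherits suitable integrability in the slab), that the integration by parts is legitimate and that no contribution at spatial infinity arises. Then, writing $\operatorname{div}(y^a \nabla v) = 0$ in $\R^n\times(0,1)$ and using the Neumann condition $v_y(x,1)=0$ on $\{y=1\}$, one gets
\[
\int_{\R^n\times(0,1)} y^a |\nabla v|^2 \, dx\, dy
= \int_{\R^n\times(0,1)} \operatorname{div}(y^a v \nabla v)\, dx\, dy
= -\int_{\R^n} \lim_{y\to 0} y^a v_y(x,y)\, v(x,0)\, dx,
\]
where the boundary term at $y=1$ vanishes by the Neumann condition and the one at $y=0$ produces precisely $\mathcal{L}_a u$ by the definition~\eqref{operator}. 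Hence $\mathcal{E}_K(v) = \tfrac12 \int_{\R^n} \mathcal{L}_a u(x)\, u(x)\, dx$.

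Next I would pass to Fourier variables. By Plancherel's theorem (in the form $\int_{\R^n} f\, \overline{g}\, dx = (2\pi)^{-n}\int_{\R^n} \widehat f\, \overline{\widehat g}\, d\xi$, consistent with the normalization implicit in~\eqref{fourier_energy}) and the Fourier representation~\eqref{anon0} of Theorem~\ref{thm_fourier}, we obtain
\[
\frac12 \int_{\R^n} \mathcal{L}_a u\, u \, dx
= \frac1{2(2\pi)^n}\int_{\R^n} \widehat{\mathcal{L}_a u}(\xi)\, \overline{\widehat u(\xi)}\, d\xi
= \frac1{2(2\pi)^n}\int_{\R^n} S_s(\xi)\, \abs{\widehat u(\xi)}^2\, d\xi,
\]
using that $u$ is real-valued and that $S_s$ is real (indeed positive, by Theorem~\ref{thm_fourier}), so that $S_s(\xi)\widehat u(\xi)\,\overline{\widehat u(\xi)} = S_s(\xi)|\widehat u(\xi)|^2$. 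Chaining the two displays gives~\eqref{fourier_energy}.

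The main obstacle is the rigorous justification of the integration by parts on the unbounded slab: one must control the flux of $y^a v \nabla v$ through the lateral boundary $\partial B_R \times (0,1)$ as $R\to\infty$, and also verify that $y^a v v_y$ is integrable near $y=0$ so that the limit defining $\mathcal{L}_a u$ can be taken inside the integral. Both points follow from the quantitative bounds on the minimal extension $v$ — its boundedness, the decay inherited from $\widehat u \in L^1$, and the $H^1(y^a)$-integrability built into the class $\mathcal{H}_u$ — together with the fact, recorded after~\eqref{asympt}, that $S_s(\xi)$ grows at most like $|\xi|^2$ for small $\xi$ and like $|\xi|^{2s}$ for large $\xi$, which guarantees that the right-hand side of~\eqref{fourier_energy} is finite for the class of data considered. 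A clean way to handle all of this at once is to approximate $v$ by compactly supported functions in the $\mathcal{E}_K$-norm (using density, as in the proof of Lemma~\ref{lemma_extension}) and pass to the limit, thereby reducing every step to the already-established Fourier identity~\eqref{anon0} and Plancherel.
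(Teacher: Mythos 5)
Your proposal is correct and follows essentially the same route as the paper: integrate by parts (using that $v$ is a weak solution with the Neumann condition at $y=1$) to reduce the bulk energy to $\tfrac12\int_{\R^n}u\,\mathcal{L}_a u\,dx$, then apply Plancherel together with the Fourier representation~\eqref{anon0}. The paper's proof is just a terser version of this, omitting the technical discussion of boundary terms at spatial infinity that you include.
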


	For later convenience, we introduce the notation 
	\begin{equation}\label{S_s_tilde}\begin{split}
&S_s(\xi)=\abs{\xi}^{2s}\widetilde{S}_s(\xi),\\{\mbox{where}}\qquad&
	\widetilde{S}_s(\xi):=c_1(s)\frac{J_{1-s}(-i\abs{\xi})}{J_{s-1}(-i\abs{\xi})},
	\end{split}\end{equation}
	and $c_1(s)$ is defined in~\eqref{c1}. When $s=1/2$, from \eqref{fourier_a0} we know that $\widetilde{S}_{1/2}$ is the hyperbolic tangent of $\abs{\xi}$. In general, $\widetilde{S}_s$ is expressed in terms of Bessel functions of the first kind, and its behavior at zero and at infinity can be easily deduced by~\eqref{asympt}. Indeed, $\widetilde{S}_s$ converges to a finite constant at infinity, while it behaves like $\abs{\xi}^{2-2s}$ near zero. This can be seen also in Figure \ref{figure_S_tilde}, where the plots of $\widetilde{S}_s$ are displayed for some values of $s\in(0,1)$. 

\begin{figure}[hbt] %hbt significa "here", "bottom", "top" e sono in ordine di preferenza
	\centering
	\includegraphics[width=12cm]{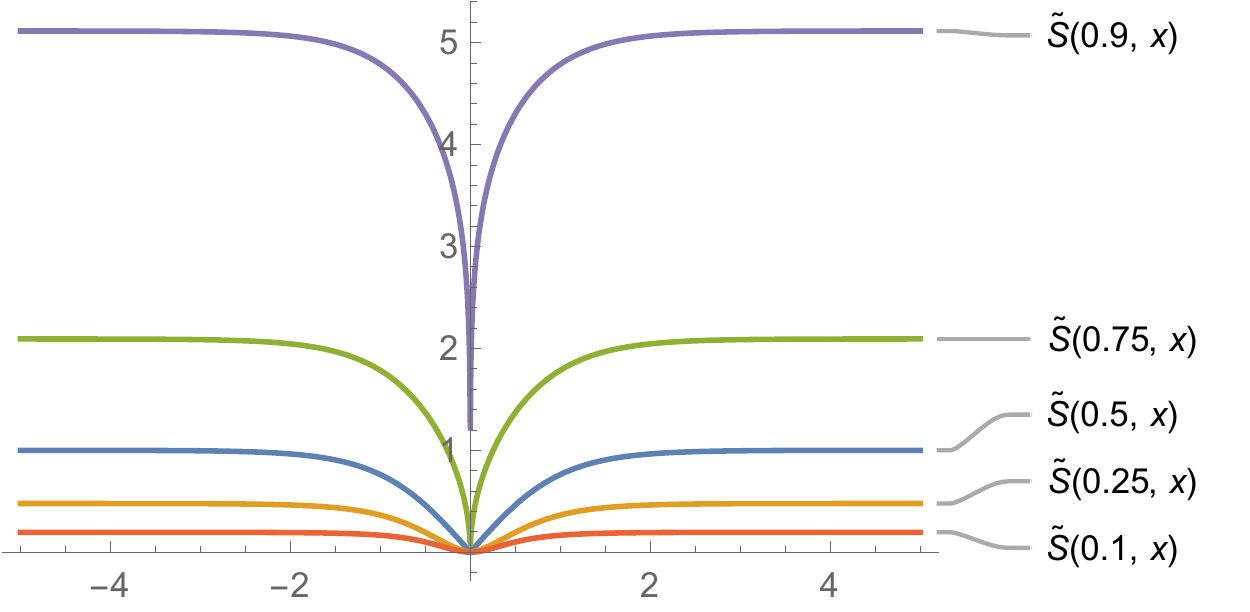}
	\caption{\em {The symbols $\widetilde{S}_s$ for different values of $s\in(0,1)$.}}
	\label{figure_S_tilde}
\end{figure}
	
	Heuristically, on the one hand,
	the connection of $\mathcal{L}_a$ with the fractional Laplacian was already evident from the formulation \eqref{lvsis}-\eqref{operator} of the operator, using the extension problem. Indeed, if we consider a solution $v$ of \eqref{lvsis} in the whole half-space and not only in a strip of fixed height, then the associated Dirichlet to Neumann operator is the fractional Laplacian $(-\Delta)^s$ with $s=(1-a)/2$ --- see \cite{CS}.

On the other hand,
the asymptotic properties outlined in~\eqref{asympt} make more clear the different nature of~$ \mathcal{L}_a $
in dependence of the parameter~$a$, which is a very specific feature of this operator. In order to further investigate this twofold behavior, we study the $\Gamma$-convergence of the energy associated to the equation $\mathcal{L}_au=W'(u)$, where $W$ is a double-well potential.

As well-known,
the $\Gamma$-convergence is a variational notion of convergence for functionals, which was introduced in~\cite{DG,DGF} and that captures the minimizing features of the energy --- see also \cite{M} for a classical example of $\Gamma$-convergence in the context of phase transitions.
In the recent years, there have been an increasing interest towards $\Gamma$-convergence results for nonlocal functionals, and some important results in this topic have been obtained, see for instance \cite{ABS,ABS1,Gon,ADM,SV,SVdue,SOUG}. For a complete introduction to topic of $\Gamma$-convergence, we refer the reader to \cite{DM,B}.

Since the operator $\mathcal{L}_a$ is strictly related to the fractional Laplacian, we are particularly interested in the paper \cite{SV} by Savin and the second author, in which they consider a proper rescaling of the energy
\begin{equation}\label{SV_functional}
\mathcal{I}_\e(u,\Omega):=\e^{2s}\mathcal{K}(u,\Omega)+\int_\Omega V(u)\,dx,
\end{equation}
where $V$ is a double-well potential, $\Omega$ a bounded set, and $\mathcal{K}(u,\Omega)$ is defined as
\begin{equation*}%\label{Kappa}
\mathcal{K}(u,\Omega):=\iint_{\Omega\times\Omega}\frac{\abs{u(x)-u(y)}^2}{\abs{x-y}^{n+2s}}\,dx\,dy+2\iint_{\Omega\times\mathscr{C}\Omega}\frac{\abs{u(x)-u(y)}^2}{\abs{x-y}^{n+2s}}\,dx\,dy.
\end{equation*}
Observe that $\mathcal{K}(u,\Omega)$ is the ``$\Omega$-contribution''
of the $H^s$ seminorm of $u$, where
\begin{equation*}%\label{Hs_norm}
[u]^2_{H^s(\R^n)}:=\iint_{\R^n\times\R^n}\frac{\abs{u(x)-u(y)}^2}{\abs{x-y}^{n+2s}}\,dx\,dy.
\end{equation*}

The main result in~\cite{SV} --- that we describe in more detail in Section \ref{sec_gamma_local} before the proof of Theorem \ref{thm_gammaconvergence} --- establishes that a proper rescaling of $\mathcal{I}_\e$ converges in the $\Gamma$-sense to the classical perimeter when $s\geq1/2$ and to the nonlocal area functional for~$s\in(0,1/2)$.

For some set $E\subset\R^n$, the nonlocal area functional of $\partial E$ in $\Omega$ is defined as $\mathcal{K}(u,\Omega)$ for $u=\chi_E-\chi_{\mathscr{C}E}$. 
This notion was introduced by Caffarelli, Roquejoffre and Savin in~\cite{CRS}, and takes into
account the interactions between points which lie in the set~$E$
and points which lie in its complement,
thus producing a functional which can be thought as a nonlocal version of the classical perimeter. For an introduction to this topic, we refer to \cite[Chapter 5]{BV}, \cite{CF}, \cite{DV}, and \cite{Luca}.

We also recall\footnote{See \cite[Proposition 3.4]{H} for the proof of \eqref{H_s_fourier}, and observe that $(2\pi)^{-n}$ is missing in the proof when they apply the Plancherel theorem.
} that the $H^s$ seminorm can be written via Fourier transform as
\begin{equation}\label{H_s_fourier}
[u]^2_{H^s(\R^n)}=\frac{2C(n,s)^{-1}}{(2\pi)^n}\int_{\R^n}\abs{\xi}^{2s}\abs{\widehat{u}(\xi)}^2\,d\xi,
\end{equation}
where 
\begin{equation}\label{C(n,s)}
C(n,s):=\left(\int_{\R^n}\frac{1-\cos(\zeta_1)}{\abs{\zeta}^{n+2s}}\,d\zeta\right)^{-1}.
\end{equation}

The alternative form \eqref{H_s_fourier} of the $H^s$ seminorm highlights the similarity between~$\mathcal{E}_K$ and the Dirichlet energy $\mathcal{K}(u,\Omega)$ in~\eqref{SV_functional}. This is evident after comparing \eqref{H_s_fourier} with expression \eqref{fourier_energy} for $\mathcal{E}_K$, taking also into account that the symbol $S_s(\xi)$ behaves like $\abs{\xi}^{2s}$ for high frequencies --- see \eqref{asympt}. 

This fact, together with the results in~\cite{SV}, leads to the natural question of studying the $\Gamma$-convergence of 
a proper rescaling of
\[
\mathcal{J}(u):=\int_{\R^n}S_s(\xi)\abs{\widehat{u}(\xi)}^2\,d\xi+\int_{\R^n}W(u)\,dx,
\]
where $W$ is a double-well potential $W(t)$. In particular, throughout the paper we assume that $W(t)$ satisfies
\begin{equation}\label{double_well_potential}
\begin{aligned}
W\in C^{2,\gamma}([0,1]), \qquad W(0)&=W(1)=0, \qquad W>0\,\,\text{in}\,\,(0,1), 
\\
W'(0)=W'(1)=0, \qquad&\text{and}\qquad W''(0)=W''(1)>0.
\end{aligned}
\end{equation}
Observe also that the fact of being a double-well potential is invariant under a multiplicative constant.

The energy functional $\mathcal{J}$ is similar to $\mathcal{I}_\e$ considered in~\cite{SV}, with the important structural
difference of replacing $\mathcal{K}(u,\Omega)$ with the Dirichlet energy associated to the operator $ \mathcal{L}_a $, expressed with the Fourier transform.

For every $ s\in(0,1) $, we consider the partial rescaling of $\mathcal{J}$ given by
\begin{equation}\label{Jfunctional}
\mathcal{J}_\e(u):=\e^{2s}\int_{\R^n}S_s(\xi)\abs{\widehat{u}(\xi)}^2\,d\xi+\int_{\R^n}W(u)\,dx.
\end{equation} 

We also define the function space in which we work as
\begin{equation}\label{1.19bis}
X:=\left\{	u\in L^\infty(\R^n) \,\,\text{s.t.}\,\,u\,\,\text{has compact support and}\,\,0\leq u\leq1	\right\},
\end{equation}
and we say that a sequence $u_j\in X$ converges to $u$ in $X$ if $u_j\to u$ in~$L^1(\R^n)$. Observe indeed that, according to the definition, $X\subset L^1(\R^n)$.

In order to obtain an interesting result in terms of $ \Gamma $-convergence, we take the rescaling of~\eqref{Jfunctional} given by $\mathcal{F}_\e:X\to\R\cup\{+\infty\}$, where
\begin{equation}\label{functional}
\mathcal{F}_\e(u):=\begin{cases}
\begin{aligned}
&\e^{-2s}\mathcal{J}_\e(u) \qquad &\text{if}\,\,s\in(0,1/2); \\
&\abs{\e\log\e}^{-1}\mathcal{J}_\e(u) \qquad &\text{if}\,\,s=1/2; \\
&\e^{-1}\mathcal{J}_\e(u) \qquad &\text{if}\,\,s\in(1/2,1).
\end{aligned}
\end{cases}
\end{equation}	

It is important to point out that the rescaling of $\mathcal{J}_\e$ that we consider here is the same as the one used for the functional $\mathcal{I}_\e$ in~\cite{SV}, and it is chosen to produce a significant $ \Gamma $-limit
from the interplay of interaction and potential energies.

When $ s\in(0,1/2) $, the limit functional $ \mathcal{F}:X\to\R\cup\{+\infty\} $ is defined as
\begin{equation}\label{nonlocal_limit}
\mathcal{F}(u):=\begin{cases}
\begin{aligned}
&\int_{\R^n} S_s(\xi)\abs{\widehat{u}(\xi)}^2\,d\xi \qquad &\text{if}\,\,u=\chi_E,\,\,\text{for some set}\,\,E\subset\R^n; \\
&+\infty \qquad &\text{otherwise}.
\end{aligned}
\end{cases}
\end{equation}
We point out that the limit functional $\mathcal{F}$ for $ s\in(0,1/2) $ is well-defined when $u=\chi_E$. This is a consequence of the fact that its difference with the $H^s$ seminorm of~$u=\chi_E$ is finite --- see the forthcoming Lemma \ref{lemma_difference} --- and that the nonlocal area functional of a bounded set is always well-defined for $s\in(0,1/2)$. Moreover, as stated explicitly in
Lemma \ref{lemma_difference} means that we can see $\mathcal{F}$ as a perturbation of the nonlocal area functional.
We will further comment on the functional $\mathcal{F}$ for $ s\in(0,1/2) $ in Proposition~\ref{prop_asympt} below.

In the case $ s\in[1/2,1) $, we define $ \mathcal{F}:X\to\R\cup\{+\infty\} $ as
\begin{equation}\label{local_limit}
\mathcal{F}(u):=\begin{cases}
\begin{aligned}
&c_\#\textnormal{Per}(E) \qquad &\text{if}\,\,u=\chi_E\,\,\,\text{for some set}\,\,\,E\subset\R^n; \\
&+\infty \qquad &\text{otherwise},
\end{aligned}
\end{cases}
\end{equation}
where $c_\#$ is a positive constant depending only on $n$ and $s$, and $\textnormal{Per}(E)$ denotes the classical
perimeter of the set $E$, in the sense described e.g. in~\cite{G}.\medskip

The following is the second main result of the present paper. It establishes the $\Gamma$-convergence of the rescaled functional~\eqref{functional} to $ \mathcal{F} $ defined in~\eqref{nonlocal_limit}-\eqref{local_limit}.

\begin{theorem}\label{thm_gammaconvergence}
	Let $ s\in(0,1) $. Then the functional $ \mathcal{F}_\e $ defined in~\eqref{functional} $ \Gamma $-converges to the functional~$ \mathcal{F} $ defined in~\eqref{nonlocal_limit}-\eqref{local_limit}, i.e. for any $u$ in $X$
	\begin{itemize}
		\item[(i)] for any $ u_\e $ converging to $u$ in $X$
		\begin{equation}\label{liminf}
		\liminf_{\e\to0^+}\mathcal{F}_\e(u_\e)\geq\mathcal{F}(u);
		\end{equation}
		\item[(ii)] there exists a sequence $ (u_\e)_\e $ converging to $ u $ in $ X $ such that
		\begin{equation}\label{limsup}
		\limsup_{\e\to0^+}\mathcal{F}_\e(u_\e)\leq\mathcal{F}(u).
		\end{equation}
	\end{itemize}
\end{theorem}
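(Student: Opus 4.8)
The plan is to establish the two $\Gamma$-convergence inequalities \eqref{liminf} and \eqref{limsup} by reducing them, as much as possible, to the corresponding statements in~\cite{SV} for the functional $\mathcal{I}_\e$, exploiting the fact that by Corollary~\ref{cor_equivalence} the interaction term $\int_{\R^n}S_s(\xi)|\widehat u(\xi)|^2\,d\xi$ differs from a multiple of $[u]^2_{H^s(\R^n)}$ only by a controlled amount. The key structural input is the forthcoming Lemma~\ref{lemma_difference}, which says that $\int_{\R^n}S_s(\xi)|\widehat u(\xi)|^2\,d\xi$ minus a constant multiple of $[u]^2_{H^s}$ is finite (and in a suitable sense lower-order), since by~\eqref{asympt} the symbol $S_s(\xi)$ is comparable to $|\xi|^{2s}$ at infinity and to $|\xi|^2$ (hence much smaller than $|\xi|^{2s}$ for $s<1$) near zero. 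Thus morally $\e^{2s}\int S_s(\xi)|\widehat u|^2 = \e^{2s}\cdot(\text{const})\,[u]^2_{H^s} + \e^{2s}(\text{remainder})$, and one must check that after the $\e$-rescalings in~\eqref{functional} the remainder does not affect the limit.

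First I would treat the regime $s\in[1/2,1)$, where the target is $c_\#\,\mathrm{Per}(E)$. Here the dominant scaling is $\e^{-1}\mathcal{J}_\e$ (or $|\e\log\e|^{-1}\mathcal{J}_\e$ when $s=1/2$), exactly as in~\cite{SV}. For the $\liminf$ inequality, I would bound $S_s(\xi)\ge c\,|\xi|^{2s}$ for $|\xi|$ large and handle small frequencies separately (there $S_s(\xi)\sim C_1|\xi|^2$, and on a compact frequency window $\e^{2s}\int_{|\xi|\le R}S_s(\xi)|\widehat{u_\e}|^2\to 0$ under $L^1$, hence $L^2$-weak-$*$-type, control of bounded sequences, so this part is negligible); this lets me dominate $\mathcal{F}_\e(u_\e)$ from below by a constant multiple of the $\mathcal{I}_\e$-type functional of~\cite{SV} plus an error tending to zero, and then invoke the $\liminf$ half of the Savin--Valdinoci theorem. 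For the $\limsup$ (recovery sequence), I would take the \emph{same} recovery sequence $u_\e$ as in~\cite{SV} — a one-dimensional optimal-profile transition layer glued along $\partial E$, smoothed at scale $\e$ — and show that replacing $[u_\e]^2_{H^s}$ by $\int S_s(\xi)|\widehat{u_\e}|^2$ changes the energy by $o(\e)$ (resp.\ $o(\e|\log\e|)$): the high-frequency part matches the $H^s$-seminorm up to the bounded factor $\widetilde S_s(\infty)$ plus an absolutely convergent correction controlled by Lemma~\ref{lemma_difference}, while the low-frequency part is $O(\e^{2s}\|u_\e\|_{L^2}^2)=O(\e^{2s})$, which is lower order once $s>1/2$ and is absorbed by the $\log$ at $s=1/2$.

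For $s\in(0,1/2)$ the target is the nonlocal functional $\mathcal{F}(u)=\int_{\R^n}S_s(\xi)|\widehat u|^2\,d\xi$ for $u=\chi_E$ and $+\infty$ otherwise, and here the scaling in~\eqref{functional} is simply $\e^{-2s}\mathcal{J}_\e(u)=\int S_s(\xi)|\widehat u|^2\,d\xi+\e^{-2s}\int W(u)$. The $\liminf$ inequality now has two pieces: first, if $\liminf\mathcal{F}_\e(u_\e)<\infty$ along a sequence $u_\e\to u$, then $\int W(u_\e)\to 0$ forces $u$ to take only the values $0,1$, i.e.\ $u=\chi_E$ a.e.; second, on that subsequence the interaction energies $\int S_s(\xi)|\widehat{u_\e}|^2$ are bounded, and by Lemma~\ref{lemma_difference} this is equivalent to a bound on $[u_\e]^2_{H^s}+(\text{finite correction})$, so $u_\e$ is bounded in $H^s$, converges weakly to $\chi_E$ there, and lower semicontinuity of the (equivalent) quadratic form $\int S_s(\xi)|\widehat{\cdot}|^2$ under weak $H^s$-convergence gives $\liminf \int S_s(\xi)|\widehat{u_\e}|^2\ge\int S_s(\xi)|\widehat{\chi_E}|^2$ — one must check $S_s$ is a nonnegative symbol, which is exactly the positivity asserted in Theorem~\ref{thm_fourier}. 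For the $\limsup$ when $u=\chi_E$ (with $\mathcal{F}(u)<\infty$, so $E$ has finite nonlocal perimeter by Lemma~\ref{lemma_difference}), the natural recovery sequence is the \emph{constant} sequence $u_\e\equiv\chi_E$: then $\int W(u_\e)=0$ identically, so $\mathcal{F}_\e(\chi_E)=\int S_s(\xi)|\widehat{\chi_E}|^2\,d\xi=\mathcal{F}(\chi_E)$ for every $\e$, giving equality in~\eqref{limsup}; if instead $u$ is not a characteristic function then $\mathcal{F}(u)=+\infty$ and~\eqref{limsup} is vacuous.

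The main obstacle I expect is the rigorous passage from the $H^s$-type estimates of~\cite{SV} to estimates for $\int S_s(\xi)|\widehat u|^2\,d\xi$, i.e.\ making Lemma~\ref{lemma_difference} do exactly the work needed in each scaling regime: one must control the low-frequency discrepancy (where $S_s\sim C_1|\xi|^2$ rather than $|\xi|^{2s}$, so the two forms are genuinely \emph{not} comparable near $\xi=0$) and verify it is truly lower-order after multiplication by the regime-dependent power of $\e$ — negligible for $s>1/2$, logarithmically negligible at $s=1/2$, and for $s<1/2$ already \emph{included} in the limit functional so that no discrepancy arises at all. A secondary technical point is the compactness/lower-semicontinuity step for $s<1/2$: turning $L^1$-convergence of an energy-bounded sequence into weak $H^s$-convergence, which again rests on Lemma~\ref{lemma_difference} to convert the $S_s$-bound into an $H^s$-bound. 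Once these equivalences are in place, the geometric heart of the argument — the optimal one-dimensional profile, the slicing along $\partial E$, the matching of constants $c_\#$ — is quoted essentially verbatim from~\cite{SV}, as announced in the text preceding Theorem~\ref{thm_gammaconvergence}.
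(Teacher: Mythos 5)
Your treatment of the range $s\in(0,1/2)$ is essentially correct and close to the paper's: the recovery sequence is the constant sequence $u_\e\equiv\chi_E$, and the potential term forces $u=\chi_E$ in the liminf. (Your lower-semicontinuity step via boundedness in $H^s$ and weak convergence is a workable variant; the paper argues more simply, using that $u_\e\to u$ in $L^1(\R^n)$ implies $\widehat{u}_\e\to\widehat{u}$ pointwise and then Fatou's lemma, with no compactness needed.) The genuine gaps are in the range $s\in[1/2,1)$. First, the liminf as you sketch it does not produce the right constant: since $\widetilde{S}_s$ is strictly increasing with limit $\mathcal{C}_s$ at infinity, one has $S_s(\xi)=\widetilde S_s(\xi)\abs{\xi}^{2s}<\mathcal{C}_s\abs{\xi}^{2s}$ for \emph{every} $\xi$, so a pointwise bound $S_s(\xi)\ge c\abs{\xi}^{2s}$ on large frequencies is only available with $c<\mathcal{C}_s$, and "a constant multiple of the $\mathcal{I}_\e$-functional" then yields $\liminf\mathcal{F}_\e(u_\e)\ge \frac{c}{\mathcal{C}_s}\,c_\#\mathrm{Per}(E)$, which is strictly weaker than~\eqref{liminf} with the limit~\eqref{local_limit} (whose constant $c_\#=\overline C_s c_*$ is dictated by the limsup construction). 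The repair is exactly the paper's add-and-subtract of $\lambda(\e)\,\overline C_s\seminorm{u_\e}^2_{H^s(\R^n)}$: by Lemma~\ref{lemma_difference} the discrepancy $\int(\widetilde S_s-\mathcal{C}_s)\abs{\xi}^{2s}\abs{\widehat u_\e}^2$ is bounded by $C\norm{u_\e}^2_{L^1}$ uniformly along the sequence, hence is annihilated by $\lambda(\e)\to0$, so the sharp constant survives; alternatively one would need an explicit $\delta\to0$ argument, which your sketch does not contain (you invoke Lemma~\ref{lemma_difference} in the overview but abandon it precisely at this step).

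Second, and more substantially, your recovery sequence for $s\in[1/2,1)$ is not admissible. The Savin--Valdinoci profile $u_\e=u_0(\mathrm{dist}(x)/\e)$ takes values in $(0,1)$ everywhere and has no compact support, so it does not belong to $X$ as defined in~\eqref{1.19bis}, and moreover \cite[Theorem 1.4]{SV} only controls the \emph{localized} energy $\mathcal{G}_\e(u_\e,B_R)$, whereas $\mathcal{F}_\e$ (after the symbol replacement) involves the full seminorm over $\R^n\times\R^n$ and the potential over all of $\R^n$. Claiming that the only thing to check is the replacement of $\seminorm{u_\e}^2_{H^s}$ by $\int S_s\abs{\widehat u_\e}^2$ therefore misses the main technical content of this half of the theorem: the paper multiplies the SV sequence by cutoffs $\psi_k$ supported in $B_R$ (at scale $\rho_k\sim 1/(kR^{n-1})$), which restores membership in $X$ and kills the exterior-exterior interactions, and then spends most of the proof estimating the error terms $I_k,\dots,I\!V_k$ and the potential difference on $B_R\setminus B_{R-\rho_k}$ to show the truncation does not raise the limsup. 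Some version of this truncation-and-error analysis (or another argument producing compactly supported recovery functions with globally controlled energy) is indispensable and is absent from your proposal.
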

We stress that the $\Gamma$-limit functional $\mathcal{F}$ is defined in two different ways depending on whether $s$ is above or below $1/2$, showing a purely local
behavior when~$s\in[1/2,1)$ and a purely nonlocal behavior when~$s\in(0,1/2)$. In view
of the different structure of the problem
in terms of the nonlocal parameter~$s$, we prove Theorem~\ref{thm_gammaconvergence}
in two different ways depending on the parameter range. For $s\in[1/2,1)$ the proof is presented in Section~\ref{sec_gamma_local}, while for $s\in(0,1/2)$ we include it in Section~\ref{sec_nonlocal}.
 
When $s\in[1/2,1)$, we recover the classical perimeter in the $\Gamma$-limit, as in the case of the energy associated to the fractional Laplacian treated in~\cite{SV}. 
%Moreover, in this case we prove Theorem \ref{thm_gammaconvergence} showing that the difference between our functional $\mathcal{F}_\e$ and the one considered in~\cite{SV} goes to zero as $\e\to0^+$.
Moreover, the result in~\cite{SV} plays a key role in our proof of Theorem \ref{thm_gammaconvergence} for $s\geq1/2$. Indeed, in this case we ``add and subtract'' the square of the $H^s$-seminorm --- properly rescaled --- to the functional $\mathcal{F}_\e$. In this way, we write $\mathcal{F}_\e$ as the nonlocal area functional plus a remainder term. We then show that the remainder term goes to zero in the limit, and deduce the proof of Theorem \ref{thm_gammaconvergence} for $s\in[1/2,1)$ from a proper application of~\cite[Theorem 1.4]{SV}.

On the other hand, when $s\in(0,1/2)$, the $\Gamma$-limit is the functional $\mathcal{F}$ defined in~\eqref{nonlocal_limit}, that has a nonlocal feature. 
As a technical remark, we also point out that, in our framework, the case~$s\in[1/2,1)$ is conceptually harder to address than the case~$s\in(0,1/2)$,
and the computational complications arising when~$s\in[1/2,1)$ are often motivated by the fact that one has to relate a nonlocal behavior at a given configuration with a local asymptotic pattern.\medskip

When $n=1$, we are able to make explicit computations with the Fourier transform, and obtain additional information on the $\Gamma$-limit functional $\mathcal{F}$ defined in~\eqref{nonlocal_limit}.

To this end,
since the limit functional $ \mathcal{F} $ is (possibly) finite only when $ u=\chi_E $ for some set $ E\subset\R $, we consider a connected interval $I_r\subset\R$ of length $r$ and the characteristic function $\chi_{I_r}$. Then, the squared modulus of the Fourier transform of~$\chi_{I_r}$~is
\[
\abs{\widehat{\chi_{I_r}}(\xi)}^2=\frac{4\sin^2(r\xi)}{\xi^2}.
\]
For the sake of completeness
we included this computation in the appendix --- see Lemma~\ref{lemma_onebump}. We also
remark that the squared modulus of the Fourier transform of $\chi_{I_r}$ depends only on the length of the interval, thus $ \mathcal{F}(\chi_{I_r}) $ only depends on $r$. 

Therefore, we can define a function $ \mathcal{T}_s(r):[0,+\infty)\longrightarrow[0,+\infty) $ as
\begin{equation}\label{intro_T_s}
\mathcal{T}_s(r):=\mathcal{F}(\chi_{I_r})=\int_\R S_s(\xi)\abs{\widehat{\chi_{I_r}}(\xi)}^2\,d\xi,
\end{equation}
	where $ I_r\subset\R$ is a connected interval of length $r$. Observe that $ \mathcal{T}_s $ depends on $s\in(0,1/2)$, as the symbol $ S_s(\xi) $ defined in~\eqref{S_s} depends on $s$. The following result contains some properties of the function $\mathcal{T}_s$ that allow us to relate it to the common notions of classical and fractional perimeter in one dimension.
	
\begin{proposition}\label{prop_asympt}
	Let $ s\in(0,1/2)$ and $n=1$. The function $ \mathcal{T}_s(r) $ defined in~\eqref{intro_T_s} is positive and enjoys the following asymptotic properties. There exist two positive constants $C_1$ and $C_2$ depending only on $s$ such that
	\begin{align}
	\label{asynt_0}
	&\lim_{r\to0^+}\frac{\mathcal{T}_s(r)}{r^{1-2s}}=C_1; 
	\\
	&\label{asynt_infty}
	\lim_{r\to+\infty}\mathcal{T}_s(r)=C_2.
	\end{align}
\end{proposition}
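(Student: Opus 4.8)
The statement concerns the one-dimensional integral
\[
\mathcal{T}_s(r)=\int_\R S_s(\xi)\,\frac{4\sin^2(r\xi)}{\xi^2}\,d\xi,
\]
and the plan is to extract the two asymptotics directly from the scaling structure of the integrand together with the asymptotics of $S_s$ recorded in~\eqref{asympt}. First I would make the change of variables $\eta=r\xi$, which turns the integral into
\[
\mathcal{T}_s(r)=4\int_\R S_s(\eta/r)\,\frac{\sin^2(\eta)}{\eta^2}\,\frac{d\eta}{r},
\]
so that the two limits $r\to0^+$ and $r\to+\infty$ correspond, after multiplying by the appropriate power of $r$, to probing $S_s$ at large frequency $\eta/r$ and at small frequency $\eta/r$ respectively.

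For~\eqref{asynt_0} (the limit $r\to0^+$), the relevant regime is $\eta/r\to\infty$, where by~\eqref{asympt} we have $S_s(\xi)\sim C_2\abs{\xi}^{2s}$. Substituting the leading term gives the heuristic $\mathcal{T}_s(r)\approx 4C_2\,r^{-1-2s}\int_\R\abs{\eta}^{2s}\frac{\sin^2\eta}{\eta^2}\,d\eta$, and the remaining integral $\int_\R\abs{\eta}^{2s-2}\sin^2\eta\,d\eta$ is a convergent positive constant precisely because $s\in(0,1/2)$ (integrability at infinity since $2s-2<-1$, integrability at the origin since $2s>0$). To make this rigorous I would split the domain into $\{\abs{\xi}\le1\}$ and $\{\abs{\xi}>1\}$: on the bounded part $S_s(\xi)\le C\abs{\xi}^2$ near zero and $S_s$ is continuous, so that contribution is $O(1)$ and hence $o(r^{1-2s})$ after multiplication; on the unbounded part I would use the uniform two-sided bound $c\abs{\xi}^{2s}\le S_s(\xi)\le C\abs{\xi}^{2s}$ for $\abs{\xi}\ge1$ (which follows from positivity, monotonicity and the infinity asymptotics of $S_s$) together with dominated convergence in the $\eta$-variable, using $\abs{\eta}^{2s}\frac{\sin^2\eta}{\eta^2}\in L^1(\R)$ as the dominating function and the pointwise convergence $r^{2s}S_s(\eta/r)\to C_2\abs{\eta}^{2s}$ as $r\to0^+$.

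For~\eqref{asynt_infty} (the limit $r\to+\infty$), the relevant regime is $\eta/r\to0$, where $S_s(\xi)\sim C_1\abs{\xi}^2$. Now $\mathcal{T}_s(r)=4\int_\R S_s(\eta/r)\frac{\sin^2\eta}{\eta^2}\frac{d\eta}{r}$, and the naive substitution $S_s(\eta/r)\approx C_1\eta^2/r^2$ would give something $O(r^{-3})$, which is wrong — the point is that the mass of $\sin^2\eta/\eta^2$ against $S_s(\eta/r)/r$ is carried by the large-$\eta$ tail, not by the region where $S_s$ is in its quadratic regime. So here it is cleaner not to rescale and instead to argue directly on $\mathcal{T}_s(r)=\int_\R S_s(\xi)\frac{4\sin^2(r\xi)}{\xi^2}\,d\xi$: since $s\in(0,1/2)$ the function $S_s(\xi)/\xi^2$ behaves like $\abs{\xi}^{-2}$ at zero and like $\abs{\xi}^{2s-2}$ at infinity, hence $\xi\mapsto S_s(\xi)/\xi^2\in L^1(\R)$. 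Replacing $\sin^2(r\xi)$ by its average $1/2$ (Riemann–Lebesgue applied to $\sin^2(r\xi)-\tfrac12=-\tfrac12\cos(2r\xi)$) yields $\mathcal{T}_s(r)\to 2\int_\R S_s(\xi)/\xi^2\,d\xi=:C_2$, a finite positive constant; positivity is immediate since $S_s>0$. Positivity of $\mathcal{T}_s(r)$ for every $r>0$ is likewise immediate from positivity of $S_s$ and of $\sin^2(r\xi)/\xi^2$.

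\textbf{Main obstacle.} The delicate point is the small-$r$ asymptotics~\eqref{asynt_0}: one must verify that the contributions outside the scaling regime (the low-frequency bump of $S_s$, where $S_s$ is quadratic rather than $2s$-homogeneous) are genuinely lower order, and one must justify the interchange of limit and integral in the $\eta$-variable despite the fact that the integrand $S_s(\eta/r)$ is only \emph{comparable} to — not equal to — $C_2\abs{\eta}^{2s}$; this is where the uniform upper bound $S_s(\xi)\le C(1+\abs{\xi}^2)$, equivalently $S_s(\xi)\le C\max(\abs{\xi}^2,\abs{\xi}^{2s})$, obtained from the monotonicity of $S_s$ together with both asymptotics in~\eqref{asympt}, does the work via dominated convergence. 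The constants $C_1,C_2$ in the statement are of course different from those in~\eqref{asympt} (they pick up the explicit convergent integrals $\int_\R\abs{\eta}^{2s-2}\sin^2\eta\,d\eta$ and $\int_\R S_s(\xi)\xi^{-2}\,d\xi$ respectively), but their positivity and finiteness are exactly the integrability facts noted above, valid precisely in the range $s\in(0,1/2)$.
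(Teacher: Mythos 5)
Your proposal follows essentially the same route as the paper: for \eqref{asynt_0}, the substitution $\eta=r\xi$ plus dominated convergence (the paper factors $S_s(\xi)=\abs{\xi}^{2s}\widetilde{S}_s(\xi)$ and uses only the boundedness of $\widetilde{S}_s$, which spares your domain splitting and two-sided bounds), and for \eqref{asynt_infty}, the replacement of $\sin^2(r\xi)$ by its mean $1/2$ against the $L^1$ function $S_s(\xi)/\xi^2$, which is exactly the paper's Lemma~\ref{limit_sin}, proved there by a shift-of-variable argument rather than by Riemann--Lebesgue (an equivalent route). One step of your small-$r$ argument is wrong as written: saying the contribution of $\{\abs{\xi}\le1\}$ is ``$O(1)$ and hence $o(r^{1-2s})$'' is not a valid implication, since $r^{1-2s}\to0$ as $r\to0^+$; you need, and immediately get, the stronger bound $O(r^2)=o(r^{1-2s})$ from $S_s(\xi)\le C\xi^2$ on $\abs{\xi}\le1$ together with $\sin^2(r\xi)\le r^2\xi^2$, after which your dominated-convergence step on $\{\abs{\xi}>1\}$ yields the claimed limit, so the slip is easily repaired and the overall argument stands.
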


We recall that from the definition of nonlocal perimeter it follows that an interval of length $ r $ has fractional perimeter of order $ r^{1-2s} $. In this sense, Proposition~\ref{prop_asympt} tells us that the limit functional defined in~\eqref{nonlocal_limit} interpolates the classical and the fractional perimeter, at least in dimension one. Indeed, for intervals of small length ${\mathcal{T}_s(r)}$ behaves like the fractional perimeter, while for large values of $r$ it converges to a constant, counting the finite number of discontinuities of $\chi_{I_r}$.\medskip

We remark that the restriction $n=1$ in Proposition~\ref{prop_asympt}
is only due to the possibility of making explicit calculations with the Fourier transform. For this reason, we think that it is an interesting question
to understand how the functional $\mathcal{F}$ defined in~\eqref{nonlocal_limit} for $s\in(0,1/2)$ interpolates classical and nonlocal objects in any dimension.
\vspace{3mm}

\subsection*{Structure of the paper} In Section~\ref{sec_extension} we prove that there exists a unique minimizer of the energy $ \mathcal{E}_K $ in the class $ \mathcal{H}_u $. In Section~\ref{sec_ft} we prove Theorem~\ref{thm_fourier} about the Fourier representation of the operator $\mathcal{L}_a$. In Section~\ref{sec_gamma_local} we prove the $\Gamma$-convergence result of Theorem~\ref{thm_gammaconvergence} when $s\geq1/2$. In Section~\ref{sec_nonlocal} we assume $s\in(0,1/2)$ and we prove both Theorem \ref{thm_gammaconvergence} and  Proposition \ref{prop_asympt} about the limit functional. 

Finally, we collect in the appendix some ancillary
computations and technical results.

\section{Existence and uniqueness of the minimizer for the Dirichlet energy}\label{sec_extension}

This section concerns the existence and the uniqueness of the minimizer of the energy $ \mathcal{E}_K $ in the class of functions $ \mathcal{H}_u $ defined in~\eqref{class} for a given a smooth function~$u$. We state the existence and uniqueness result as follows.

\begin{lemma}\label{lemma_extension}
If $u$ is a bounded smooth function defined in $\R^n$, then there exists a unique minimizer of the functional $ \mathcal{E}_K $ in the class $ \mathcal{H}_u $.	
\end{lemma}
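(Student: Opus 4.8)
The plan is to set this up as a standard exercise in the direct method of the calculus of variations, working in the weighted Sobolev space $H^1_{\mathrm{loc}}(\R^n\times(0,1),y^a)$. The weight $y^a$ with $a\in(-1,1)$ is a Muckenhoupt $A_2$ weight, so the relevant functional-analytic machinery (Poincaré and trace inequalities with the weight $y^a$, density of smooth functions, compactness) is available; I would invoke this at the outset. The first step is to check that $\mathcal{E}_K$ is not identically $+\infty$ on $\mathcal{H}_u$: since $u$ is bounded and smooth, one can exhibit an explicit competitor, for instance $w(x,y)=u(x)$ (whose gradient is $\nabla_x u$, with $\int_0^1 y^a\,dy<+\infty$ because $a>-1$), or better a competitor with an extra cutoff in $y$ near $y=1$ to respect the Neumann datum; this shows $\inf_{\mathcal{H}_u}\mathcal{E}_K<+\infty$.

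Next I would take a minimizing sequence $v_j\in\mathcal{H}_u$ with $\mathcal{E}_K(v_j)\to\inf\mathcal{E}_K$. The bound $\int y^a|\nabla v_j|^2$ uniformly bounded, together with the Dirichlet condition $v_j(\cdot,0)=u$ and a weighted Poincaré/trace inequality, gives a uniform bound on $v_j$ in $H^1(\R^n\times(0,1),y^a)$ on every bounded subdomain $B_R\times(0,1)$ — here one uses that fixing the trace on $\{y=0\}$ controls the full norm via the weighted Poincaré inequality, a point worth stating carefully. By weak compactness in the weighted space and a diagonal argument over $R\to\infty$, extract a subsequence converging weakly (locally) to some $v\in H^1_{\mathrm{loc}}(\R^n\times(0,1),y^a)$. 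The trace operator is continuous and compact enough that the Dirichlet condition passes to the limit, so $v\in\mathcal{H}_u$. Lower semicontinuity of $w\mapsto\int y^a|\nabla w|^2$ under weak convergence (it is convex and the weight is nonnegative) then yields $\mathcal{E}_K(v)\le\liminf\mathcal{E}_K(v_j)=\inf_{\mathcal{H}_u}\mathcal{E}_K$, so $v$ is a minimizer.

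For uniqueness, I would use strict convexity: $\mathcal{H}_u$ is a convex set (the Dirichlet constraint is affine), and $w\mapsto\mathcal{E}_K(w)$ is convex, being a quadratic form with nonnegative integrand. If $v_1,v_2$ are both minimizers, the parallelogram-type identity
\[
\mathcal{E}_K\!\left(\tfrac{v_1+v_2}{2}\right)+\frac14\int_{\R^n\times(0,1)}y^a|\nabla v_1-\nabla v_2|^2\,dx\,dy=\frac12\mathcal{E}_K(v_1)+\frac12\mathcal{E}_K(v_2)
\]
forces $\nabla(v_1-v_2)\equiv0$, hence $v_1-v_2$ is constant; since $v_1-v_2=0$ on $\{y=0\}$ in the trace sense, the constant is $0$, giving $v_1=v_2$.

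The main obstacle is the noncompactness of the domain in the $x$-directions: the energy involves an unbounded integral over $\R^n$, so the minimizing sequence is only controlled locally, and one must be careful that the limit object genuinely lies in $\mathcal{H}_u$ and that lower semicontinuity survives the passage from local weak convergence to the global integral (Fatou's lemma on an exhaustion handles this). A secondary technical point is making precise the weighted trace and Poincaré inequalities that pin down the $H^1(y^a)$-norm from the boundary datum plus the energy bound; these are standard for $A_2$ weights but should be cited. I expect the convexity argument for uniqueness to be essentially immediate once existence is in place.
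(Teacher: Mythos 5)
There is a genuine gap at the very first step: the finiteness of $\inf_{\mathcal{H}_u}\mathcal{E}_K$. Your proposed competitor $w(x,y)=u(x)$ has energy $\tfrac12\bigl(\int_0^1 y^a\,dy\bigr)\int_{\R^n}|\nabla u|^2\,dx$, and the lemma only assumes $u$ bounded and smooth, with no decay: for $u(x)=\sin x_1$ one has $\nabla u\notin L^2(\R^n)$, and in fact \emph{every} $v\in\mathcal{H}_u$ has infinite energy (by translation invariance each periodicity cell contributes at least a fixed positive amount, since the trace on $\{y=0\}$ is non-constant there). So the infimum can be $+\infty$, a minimizing sequence carries no uniform bound, and the whole direct-method scheme (local weak compactness from the energy bound, Fatou on an exhaustion, etc.) never gets started. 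The same issue degenerates your uniqueness argument: the parallelogram identity only separates competitors of finite energy, so it says nothing when all energies are infinite. In short, the statement must be understood (and is proved in the paper) in the sense of minimizers with respect to localized comparison, not as minimizers of a globally finite functional, and your proof only covers the special case $\nabla u\in L^2(\R^n)$.

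The paper's proof is built precisely to avoid this obstruction: it first reflects across $\{y=1\}$, replacing the Neumann condition by a Dirichlet datum $u$ on both $\{y=0\}$ and $\{y=2\}$ (the minimizer of the doubled problem is symmetric, hence its restriction solves the original one); it then minimizes on the bounded cylinders $B_R\times(0,2)$, where the direct method with the weighted Poincar\'e inequality applies without any global integrability of $u$; finally it sends $R\to\infty$ using estimates that are uniform only \emph{locally}: a Caccioppoli inequality obtained by testing with $v_S\eta^2$, combined with the bound $\|v_S\|_{L^\infty}\le\|u\|_{L^\infty}$ coming from truncation/maximum principle. These local bounds give a locally convergent subsequence whose limit inherits minimality from the local minimizers $v_R$. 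If you want to rescue your approach, you would need either to add the hypothesis that $u$ has finite Dirichlet-type energy (e.g. $\nabla u\in L^2$, or $u$ integrable with $\int S_s(\xi)|\widehat u(\xi)|^2\,d\xi<\infty$), or to replace the global minimizing sequence by the paper's exhaustion-by-cylinders construction; the reflection trick (or some other device handling the Neumann side) would still be needed to run the localized minimization cleanly. Your uniqueness argument, in the finite-energy setting, is essentially the paper's Step 1 in parallelogram form and is fine there.
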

\begin{proof}
	\textbf{Step 1.} First, using a classical convexity argument,
	we prove that if such a minimizer exists, then it is \textit{unique}. If we assume that $v$ and $w$ are two minimizers of $ \mathcal{E}_K $ in $ \mathcal{H}_u $, then considering the energy of their arithmetic mean we find that
	\begin{equation}\label{76}
	\begin{split}
	\mathcal{E}_K\left(\frac{v+w}{2}\right)&=\frac12\int_{\R^n\times(0,1)}y^a\frac{\abs{\nabla v}^2+\abs{\nabla w}^2+2\nabla v\cdot\nabla w}{4}\,dx\,dy 
	\\
	&\leq\frac12\mathcal{E}_K(v)+\frac12\mathcal{E}_K(w) = \mathcal{E}_K(v).
	\end{split}
	\end{equation}
	Since $v$ and $w$ are minimizers for $ \mathcal{E}_k $, the Cauchy-Schwarz inequality in~\eqref{76} is an equality, hence
	\[
	\nabla v = \lambda \nabla w.
	\]
	Now, since $ \mathcal{E}_K(v)=\mathcal{E}_K(w) $, then $ \lambda=\pm1 $. If $ \lambda=+1 $, then $ v $ and $ w $ are equal up to an additive constant, but this constant must be zero since both functions are equal to $u(x)$ when~$ y=0 $. If instead $ \lambda=-1 $, then from~\eqref{76} we deduce that $ \mathcal{E}_K(v)=\mathcal{E}_K(w)=0 $, therefore $v$ and $w$ are constant, and these constants must coincide since they agree when $ y=0. $
	
	\textbf{Step 2.} Let us now prove \textit{existence}. First, we observe that this is equivalent to proving that there exists a minimizer of the energy
	\[
	\mathcal{E}_{K,2}(v):=\frac12\int_{\R^n\times(0,2)}y^a\abs{\nabla v}^2\,dx\,dy,
	\]
	in the class of functions
	\begin{equation*}
	\mathcal{H}_{u,2}:=\{ w\in H^1_\text{loc}(\R^n\times(0,2),y^a)\,\,\text{s.t.}\,\,w(x,0)=w(x,2)=u(x)\,\,\text{for a.e.}\,\,x\in\R^n \}.
	\end{equation*}
	
	Indeed, let us suppose for the moment that such a minimizer exists and let us denote it with $\overline{v}$. Then, we can deduce that it is unique, using the same argument as in Step 1. 
	
	Furthermore, since $ \overline{v} $ is a minimizer, then it is symmetric with respect to $\{y=1\}$. To see this, let us consider the competitor
	\begin{equation*}
	\widetilde{v}(x,y):=\begin{cases}
	\begin{aligned}
	&\overline{v}(x,y) \qquad &\text{if}\,\,0<y<1 \\
	&\overline{v}(x,2-y) \qquad &\text{if}\,\,1<y<2,
	\end{aligned}
	\end{cases}
	\end{equation*}	
	for which we have $\mathcal{E}_{K,2}(\widetilde{v})=\mathcal{E}_{K,2}(\overline{v})$ and $ \widetilde{v}\in\mathcal{H}_{u,2}$. By the uniqueness of the minimizer of $ \mathcal{E}_{K,2} $ in $ \mathcal{H}_{u,2} $, we deduce that $ \widetilde{v}\equiv\overline{v} $, and therefore that $\overline{v}$ is symmetric with respect to~$\{y=1\}$.
	Now, if we consider the restriction $\overline{v}_{|\R^n\times(0,1)}$, then it belongs to $\mathcal{H}_{u}$. In addition, using the minimality and symmetry properties of $\overline{v}$, we deduce by a reflection argument that $\overline{v}_{|\R^n\times(0,1)}$ minimizes $ \mathcal{E}_K $ in $ \mathcal{H}_u $.
	
	Summarizing, to prove Lemma \ref{lemma_extension} we are reduced to show that
	\begin{equation}\label{minimizer_2}
		\text{there exists a minimizer of the energy} \,\,\,  \mathcal{E}_{K,2} \,\,\,\text{in the class}\,\,\, \mathcal{H}_{u,2}. 
	\end{equation}

	In order to prove \eqref{minimizer_2}, we minimize the localized functional $\mathcal{E}_{K,2}$ on $ B_R\times(0,2) $ and then take the limit as $R\to+\infty$. More precisely, we want to prove that there exists a minimizer of
	\[
	\mathcal{E}_{K,2}^R(v):=\frac12\int_{B_R\times(0,2)}y^a\abs{\nabla v}^2\,dx\,dy,
	\]
	in the space
	\begin{equation*}
		\mathcal{H}^R_{u,2}:=\{ w\in H^1(B_R\times(0,2),y^a)\,\,\text{s.t.}\,\,w(x,0)=w(x,2)=u(x)
		\,\,\text{for a.e.}\,\,x\in B_R	\},
	\end{equation*}
	and then take the limit as $R\to\infty$.
	
	The existence of local minimizers for this problem follows from classical tools in the calculus of variations. Indeed, the lower boundedness of $ \mathcal{E}^R_{K,2} $ and the convexity with respect to the gradient give the weak lower semi-continuity of the functional --- see \cite[Theorem 1, p.446]{E}. In addition, $ \mathcal{E}^R_{K,2} $ is coercive\footnote{As a technical observation,
	we point out that the coercivity in this setting
	follows from the Poincar\'e inequality with Muckenhoupt weights --- see \cite[Chapter 15]{HKM}. We also observe that, for this inequality to hold, it is enough to assume the Dirichlet datum on a portion of the boundary with nonnegative Hausdorff measure.} in the $ H^1(B_R\times(0,2),y^a) $-norm and this, together with weak lower semicontinuity, is enough to conclude the existence of a minimizer of~$\mathcal{E}_{K,2}^R$ in the class $ \mathcal{H}^R_{u,2} $.
	
	Furthermore, the local minimizer is unique for every $R>0$, again by the standard convexity argument of Step 1. Therefore, for every $R>0$ we know that there exists a unique minimizer $v_R$ of $ \mathcal{E}^R_{K,2} $ in  $ \mathcal{H}_{u,2} $ and we want to deduce~\eqref{minimizer_2}, passing to the limit as $R\to\infty$. 
	
	To this end, we first observe that $v_S$ solves $\textnormal{div}(y^a\nabla v_S)=0$ in the weak sense in~$C_R$, whenever~$S\geq R$. We choose $ \varphi=v_S\eta^2 $ in the weak formulation~\eqref{eq_weak} of the equation, where $ \eta\in C^\infty_c(C_R,[0,1]) $ and $\eta\equiv1$ in $C_{R/2}$. Using also a Cauchy-Schwarz inequality, we obtain the Caccioppoli bound
	\begin{equation}\label{caccioppoli}
	\int_{B_{R/2}\times(0,2)}y^a\abs{\nabla v_S}^2\leq C\int_{B_{R/2}\times(0,2)}y^a\abs{v_S}^2,
	\end{equation}
	for a constant $C$ depending only on $R$.
	
	We then observe that, thanks to the maximum principle, every minimizer $v_S$ of the energy functional attains its maximum at a boundary point. This maximum has to be less or equal than $\norm{u}_{L^\infty(\R^n)}$, where $u$ is the Dirichlet datum on the top and the bottom of the cylinder. Indeed, if this is not the case, then we can build a competitor with lower energy than $v_S$ by simply truncating $v_S$ when its absolute value exceeds $\norm{u}_{L^\infty(\R^n)}$.
	
	Therefore, we can bound the right-hand side of \eqref{caccioppoli} with a constant depending only on $n$, $R$ and $\norm{u}_{L^\infty(\R^n)}$. This gives a uniform bound on the $H^1(B_{R/2}\times(0,2),y^a)$-norm of $v_S$ for every $S>R$. Hence, we can find a subsequence of $(v_S)$ that converges locally to a function $\overline{v}\in\mathcal{H}_{u,2}$. Finally, $\overline{v}$ minimizes $\mathcal{E}_{K,2}$ in $\mathcal{H}_{u,2}$ since $v_S$ are local minimizers, and this proves~\eqref{minimizer_2}. This concludes the proof of Lemma~\ref{lemma_extension}.
\end{proof}

\section{The energy via Fourier transform}
\label{sec_ft}
	In this section we want to prove the representation via Fourier transform of the operator $\mathcal{L}_a$, outlined in Theorem~\ref{thm_fourier}. We start by considering the simplest case $a=0$. To this end,
we observe that problem~\eqref{lvsis} with $ a=0 $ reads
	\begin{align}
	\label{a0system}
	\begin{cases}
	\Delta v=0\quad&{\mbox{ in }}\R^n\times(0,1)\\
	\partial_y v=0\quad&{\mbox{ on }}\R^n\times\{y=1\} \\
	v(x,y)=u(x)\quad&{\mbox{ on }}\R^n\times\{y=0\},
	\end{cases}
	\end{align}
	and the Dirichlet to Neumann operator is
	\begin{equation}\label{a0_operator}
	\mathcal{L}_0 u=-\partial_yv(x,y)_{\lvert_{\{y=0\}}}.
	\end{equation}
	
	In this case, the representation via Fourier transform already appears in~\cite{DllV} by de la Llave and the second author. We state here explicity
	this result and give a simple proof of it. We will then use the same strategy, combined
	with a suitable special functions analysis, to prove Theorem~\ref{thm_fourier} in the general case $a\in(-1,1)$.
	\begin{proposition}[de la Llave, Valdinoci \cite{DllV}]\label{prop_a0}
	For every smooth bounded function $u$ defined on $\R^n$ which is integrable, we can write the operator $ \mathcal{L}_0 $ defined in~\eqref{a0_operator} via Fourier transform as 
	\begin{equation}
	\label{a0}
	\widehat{\mathcal{L}_0u}=S_{1/2}(\xi)\widehat{u}(\xi)=\frac{e^{\abs{\xi}}-e^{-\abs{\xi}}}{e^{\abs{\xi}}+e^{-\abs{\xi}}}\abs{\xi}\widehat{u}(\xi).
	\end{equation}
	\end{proposition}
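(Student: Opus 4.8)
The plan is to solve the boundary value problem~\eqref{a0system} explicitly by taking the Fourier transform in the horizontal variable~$x$. Writing $\widehat v(\xi,y)$ for the partial Fourier transform of $v(\cdot,y)$, the equation $\Delta v=0$ in the slab becomes the ordinary differential equation $\widehat v_{yy}(\xi,y)=\abs{\xi}^2\widehat v(\xi,y)$ for each fixed frequency $\xi$, whose general solution is a linear combination of $e^{\abs{\xi}y}$ and $e^{-\abs{\xi}y}$ (equivalently of $\cosh(\abs{\xi}y)$ and $\sinh(\abs{\xi}y)$). So first I would justify that this reduction is legitimate: since $u$ is smooth, bounded, and integrable, $\widehat u$ is well-defined and continuous, and the minimal extension $v$ provided by Lemma~\ref{lemma_extension} is regular enough (by classical interior elliptic estimates, $v$ being harmonic in the open slab) that one may differentiate under the Fourier transform and the limit in~\eqref{a0_operator} exists pointwise in~$\xi$.

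Next I would impose the two boundary conditions to pin down the two constants. The Neumann condition $\partial_y v(x,1)=0$ gives $\widehat v_y(\xi,1)=0$, which forces the solution to be proportional to $\cosh(\abs{\xi}(y-1))$ (the combination whose $y$-derivative vanishes at $y=1$). The Dirichlet condition $v(x,0)=u(x)$ gives $\widehat v(\xi,0)=\widehat u(\xi)$, which fixes the constant: $\widehat v(\xi,y)=\widehat u(\xi)\,\dfrac{\cosh(\abs{\xi}(y-1))}{\cosh(\abs{\xi})}$. Differentiating in $y$ and evaluating at $y=0$ yields $\widehat v_y(\xi,0)=-\abs{\xi}\widehat u(\xi)\,\dfrac{\sinh(\abs{\xi})}{\cosh(\abs{\xi})}=-\abs{\xi}\tanh(\abs{\xi})\,\widehat u(\xi)$. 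Hence $\widehat{\mathcal{L}_0 u}(\xi)=-\widehat v_y(\xi,0)=\abs{\xi}\tanh(\abs{\xi})\,\widehat u(\xi)$, and rewriting $\tanh(\abs{\xi})=\dfrac{e^{\abs{\xi}}-e^{-\abs{\xi}}}{e^{\abs{\xi}}+e^{-\abs{\xi}}}$ gives exactly~\eqref{a0}. The identification of $S_{1/2}(\xi)$ with this symbol is then just the definition.

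The one point that deserves genuine care — and which I expect to be the only real obstacle — is not the formal computation but the justification that the function obtained by inverting $\widehat u(\xi)\cosh(\abs{\xi}(y-1))/\cosh(\abs{\xi})$ really is the minimal extension $v$ from Lemma~\ref{lemma_extension}, and in particular that it is bounded and lies in the right weighted space. The bound $\abs{\cosh(\abs{\xi}(y-1))/\cosh(\abs{\xi})}\le 1$ for $y\in[0,1]$ keeps things under control in the frequency variable, and one can check directly that this $v$ solves~\eqref{a0system} in the weak sense~\eqref{eq_weak} via Plancherel; uniqueness of the minimizer (Step~1 of Lemma~\ref{lemma_extension}) then identifies it. A clean alternative is to argue purely in Fourier space: verify that the candidate symbol makes the weak formulation~\eqref{eq_weak} hold and invoke uniqueness, so that one never needs to reconstruct $v$ in physical space at all. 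Either way, once the extension is identified, formula~\eqref{a0} follows immediately from the ODE computation above, and the same template — separation of variables, Neumann condition at $y=1$, Dirichlet condition at $y=0$ — will carry over to the general weighted case $a\in(-1,1)$, with $\cosh$ and $\sinh$ replaced by the appropriate modified Bessel functions solving $\widehat v_{yy}+\frac ay\widehat v_y=\abs{\xi}^2\widehat v$.
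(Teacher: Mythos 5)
Your proposal is correct and follows essentially the same route as the paper: take the Fourier transform in $x$, solve the resulting ODE in $y$, impose the Neumann condition at $y=1$ and the Dirichlet condition at $y=0$, and read off the symbol $\abs{\xi}\tanh(\abs{\xi})$. The only differences are cosmetic (you use the $\cosh(\abs{\xi}(y-1))$ basis where the paper solves a $2\times2$ system for the coefficients of $e^{\pm\abs{\xi}y}$) plus some extra, welcome care about identifying the Fourier-side solution with the unique minimal extension of Lemma~\ref{lemma_extension}, a point the paper leaves implicit.
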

	\begin{proof}
	Taking the Fourier transform of the first equation in~\eqref{a0system}, we find an ODE in the variable $y$, that is
	\[
	-\lvert\xi\rvert^2\widehat{v}+\widehat{v}_{yy}=0.
	\]
	This equation is solved by
	\[
	\widehat{v}(\xi,y)=\alpha(\xi)e^{\lvert\xi\rvert y}+\beta(\xi)e^{-\lvert\xi\rvert y},
	\]
	where $\alpha$ and $\beta$ are functions depending only on $\xi$. In order to determine~$\alpha$ and~$\beta$, we consider the Fourier transform of the second and third equations in~\eqref{a0system}. The Dirichlet condition on $\{y=0\}$ gives
	\[
	\alpha(\xi)+\beta(\xi)=\widehat{u}(\xi),
	\]
	while the Neumann condition on $\{y=1\}$ gives
	\[
	\alpha(\xi)\lvert\xi\rvert e^{\lvert\xi\rvert}-\beta(\xi)\lvert\xi\rvert e^{-\lvert\xi\rvert}=0.
	\]
	Therefore, we find
	\begin{equation*}
	\alpha(\xi)=\frac{e^{-2\lvert\xi\rvert}}{1+e^{-2\lvert\xi\rvert}}\widehat{u}(\xi)
	\qquad
	\text{and}
	\qquad
	\beta(\xi)=\frac{1}{1+e^{-2\lvert\xi\rvert}}\widehat{u}(\xi).
	\end{equation*}
	Finally, computing the Fourier transform of $\mathcal{L}_0 u$, we find
	\begin{equation*}
	\widehat{\mathcal{L}_0u}(\xi)=-\partial_y\widehat{v}(\xi,y)_{\lvert_{\{y=0\}}}=\left(\beta(\xi)- \alpha(\xi)\right)\abs{\xi}=\frac{e^{\lvert\xi\rvert}-e^{-\lvert\xi\rvert}}{e^{\lvert\xi\rvert}+e^{-\lvert\xi\rvert}}\abs{\xi}\widehat{u}(\xi),
	\end{equation*}
	and this proves~\eqref{a0}.
	\end{proof}

	Now, we consider problem~\eqref{lvsis} for a general parameter~$a\in(-1,1)$ and we complete the proof of Theorem~\ref{thm_fourier}. For this,
	we use the same strategy as in the proof of Proposition~\ref{prop_a0}, but extra computations are required, together with a set of useful identities involving special functions.
	
	\begin{proof}[Proof of Theorem~\ref{thm_fourier}]
	As we did in the case $a=0$, we start by considering the Fourier transform of the first equation in~\eqref{lvsis}, that is
	\[
	-\lvert\xi\rvert^2y^a\widehat{v}+ay^{a-1}\widehat{v}_y+y^a\widehat{v}_{yy}=0.
	\]
	This is an ODE with respect to the variable $y$ and it is solved by
	\[
	\widehat{v}(\xi,y)=\alpha(\xi)y^{\frac{1-a}{2}}J_{{\frac{a-1}{2}}}(-i\lvert\xi\rvert y)+\beta(\xi)y^{\frac{1-a}{2}}Y_{{\frac{a-1}{2}}}(-i\lvert\xi\rvert y),
	\]
	where $J_m$ and $Y_m$ are Bessel functions of order $m$ of the first and second kind respectively, while $\alpha$ and $\beta$ are functions depending only on $\xi$.
		
	In order to determine $\alpha(\xi)$ and $\beta(\xi)$, we consider the Fourier transform of the second and third equations in~\eqref{lvsis}. The equation on $\{y=0\}$ gives
	
	\begin{equation}\label{fourier_eq1}
	\widehat{u}(\xi)=\alpha(\xi)\lim_{y\to0}y^{\frac{1-a}{2}}J_{\frac{a-1}{2}}(-i\lvert\xi\rvert y)+\beta(\xi)\lim_{y\to0}y^{\frac{1-a}{2}}Y_{\frac{a-1}{2}}(-i\lvert\xi\rvert y).
	\end{equation}
	We recall the two following properties of Bessel functions
	\begin{equation}
	\label{propJ}
	\lim_{x\to0}\frac{J_m(-ix)}{x^m}
	=\frac{2^{-2m}(1-i)^{2m}}{\Gamma(m+1)};
	\end{equation}
	\begin{equation}\label{bessel_prop}
	\text{for non integer $m$,}\qquad Y_m(x)=\frac{J_m(x)\cos(m\pi)-J_{-m}(x)}{\sin(m\pi)}.
	\end{equation}
	Now, using \eqref{propJ} and \eqref{bessel_prop}, we can write \eqref{fourier_eq1} as
	\begin{equation*}
	\widehat{u}(\xi)=\alpha(\xi) (1-i)^{a-1} \frac{2^{1-a}}{\Gamma(\frac{a+1}{2})}\abs{\xi}^\frac{a-1}{2}+\beta(\xi)(1-i)^{a-1} \frac{2^{1-a}}{\Gamma(\frac{a+1}{2})}\frac{\cos\big(\frac{a-1}{2}\pi\big)}{\sin\big(\frac{a-1}{2}\pi\big)}\abs{\xi}^\frac{a-1}{2}.
	\end{equation*}
	Using the relation $1-a=2s$, the equation on $\{y=0\}$ can be finally written as
	\begin{equation}\label{eq1}
		\abs{\xi}^s\widehat{u}(\xi)=\frac{(1-i)^{-2s}2^{2s}}{\Gamma(1-s)}\left\{ \alpha(\xi)-\frac{\cos\left(s\pi\right)}{\sin\left(s\pi\right)}\beta(\xi) \right\}.
	\end{equation}
%	Here 
%	\begin{equation}
%		\label{c1c2}
%		c_1=\frac{2^\frac{1-a}{2}}{\Gamma(\frac{a+1}{2})}  \qquad\text{and}\qquad  c_2=\frac{2^\frac{1-a}{2}}{\Gamma(\frac{a+1}{2})}\frac{\cos\big(\frac{a-1}{2}\pi\big)}{\sin\big(\frac{a-1}{2}\pi\big)} .
%	\end{equation}
	
	Now, we want to use the equation on $\{y=1\}$. First, we compute the derivative of~$\widehat{v}(\xi,y)$ with respect to $y$
	\begin{equation*}
		\begin{split}
			&\hspace{-0.5cm}\partial_y\widehat{v}(\xi,y)=\alpha(\xi)\frac{1-a}{2}y^\frac{-1-a}{2}J_{\frac{a-1}{2}}(-i\lvert\xi\rvert y)-\alpha(\xi)y^{\frac{1-a}{2}}i\lvert\xi\rvert J_{\frac{a-1}{2}}'(-i\lvert\xi\rvert y) 
			\\
			&\hspace{2cm}+\beta(\xi)\frac{1-a}{2}y^\frac{-1-a}{2}Y_{\frac{a-1}{2}}(-i\lvert\xi\rvert y)-\beta(\xi)y^{\frac{1-a}{2}}i\lvert\xi\rvert Y_{\frac{a-1}{2}}'(-i\lvert\xi\rvert y).
		\end{split}
	\end{equation*}
	We can simplify this expression using the following formulas for the derivatives of Bessel functions
	\[
	J_{\frac{a-1}{2}}'(x)=\frac{a-1}{2x}J_{\frac{a-1}{2}}(x)-J_{\frac{a+1}{2}}(x),
	\]
	\begin{equation*}
	Y_{\frac{a-1}{2}}'(x)=\frac{a-1}{2x}Y_{\frac{a-1}{2}}(x)-Y_{\frac{a+1}{2}}(x).
	\end{equation*}
	This gives
	\begin{equation}
		\label{dy}
		\partial_y\widehat{v}(\xi,y)=\alpha(\xi)i\lvert\xi\rvert y^\frac{1-a}{2}J_{\frac{a+1}{2}}(-i\lvert\xi\rvert y)+\beta(\xi)i\lvert\xi\rvert y^\frac{1-a}{2}Y_{\frac{a+1}{2}}(-i\lvert\xi\rvert y).
	\end{equation}
	Using again the relation $ 1-a=2s $, we write the Neumann condition over $ \{y = 1\} $ as
	\begin{equation}\label{eq2}
		0=J_{1-s}(-i\lvert\xi\rvert) \alpha(\xi)+ Y_{1-s}(-i\lvert\xi\rvert) \beta(\xi).
	\end{equation}
	To determine $\alpha$ and $\beta$, we put together the information given by~\eqref{eq1} and~\eqref{eq2} --- which are deduced from the second and third equation in~\eqref{lvsis}. In this way, we obtain the system
	\begin{equation}
	\label{alphabeta}
		\begin{cases}
			J_{1-s}(-i\lvert\xi\rvert) \alpha(\xi)+ Y_{1-s}(-i\lvert\xi\rvert) \beta(\xi)=0 \\ 
			\alpha(\xi)-\frac{\cos(s\pi)}{\sin(s\pi)}\beta(\xi)=\left(\frac{1-i}{2}\right)^{2s}\Gamma(1-s)\abs{\xi}^s\,\widehat{u}(\xi).
		\end{cases}
	\end{equation}
%	where 
%	\begin{equation}\label{9BIS}
%		{\mbox{$\gamma(\lvert\xi\rvert)=
%				J_{\frac{a+1}{2}}(-i\lvert\xi\rvert)$ and $
%\delta(\lvert\xi\rvert)=Y_{\frac{a+1}{2}}(-i\lvert\xi\rvert)$}}\end{equation}
%	are two functions of $ \lvert\xi\rvert $.
	Solving~\eqref{alphabeta}, we find
	\begin{equation}\label{beta}
	\begin{split}
	\alpha(\xi)&=-\widetilde{c}(s)\frac{Y_{1-s}(-i\lvert\xi\rvert)}{\cos\left(s\pi\right) J_{1-s}(-i\lvert\xi\rvert)+\sin\left(s\pi\right) Y_{1-s}(-i\lvert\xi\rvert)}\abs{\xi}^{s}\widehat{u}(\xi),
	\\
	\beta(\xi)&=\widetilde{c}(s)\frac{J_{1-s}(-i\lvert\xi\rvert)}{\cos\left(s\pi\right) J_{1-s}(-i\lvert\xi\rvert)+\sin\left(s\pi\right) Y_{1-s}(-i\lvert\xi\rvert)}\abs{\xi}^{s}\widehat{u}(\xi)
	\end{split}
	\end{equation}
	where
	\begin{equation*}
	\widetilde{c}(s):=-\left(\frac{1-i}{2}\right)^{2s}\sin(s\pi)\Gamma(1-s).
	%\frac{\sin\left(\frac{a-1}{2}\pi\right) \Gamma\left(\frac{a+1}{2}\right)}{(1-i)^{a-1}2^{1-a}}.
	\end{equation*}
	Using formula~\eqref{dy} for the $y$-derivative of $\widehat{v}$, we can compute the Fourier transform of $\mathcal{L}_a u$ and find
	\begin{equation}\label{transf:1}
		\begin{split}
			\widehat{\mathcal{L}_au}(\xi)&=-y^a\partial_y\widehat{v}(\xi,y)_{\lvert_{\{y=0\}}} \\
			&=-i\lvert\xi\rvert\bigg[\alpha(\xi)\lim_{y\to0}  y^{1-s}J_{1-s}(-i\lvert\xi\rvert y)+\beta(\xi)\lim_{y\to0} y^{1-s}Y_{1-s}(-i\lvert\xi\rvert y)\bigg].
		\end{split}
	\end{equation}
	Using the properties in~\eqref{propJ}-\eqref{bessel_prop} of Bessel functions, we see that the first limit in~\eqref{transf:1} is zero, and the second one gives a nontrivial contribution. More specifically, we have that
	\begin{equation*} 
			\widehat{\mathcal{L}_au}(\xi)=\frac{i}{\sin(s\pi)\Gamma(s)} \left(\frac{1-i}{2}\right)^{2s-2}
			\abs{\xi}^s\beta(\xi).
	\end{equation*}
	We can simplify this expression, also using~\eqref{bessel_prop} in~\eqref{beta}, and write it as
	\begin{equation*}
		\widehat{\mathcal{L}_au}(\xi)=
		%c_1(a)\frac{J_{\frac{a+1}{2}}(-i\lvert\xi\rvert)}{\cos\left(\frac{a-1}{2}\pi\right) J_{\frac{a+1}{2}}(-i\lvert\xi\rvert)-\sin\left(\frac{a-1}{2}\pi\right) Y_{\frac{a+1}{2}}(-i\lvert\xi\rvert)}
		c_1(s)\frac{J_{1-s}(-i\abs{\xi})}{J_{s-1}(-i\abs{\xi})}\abs{\xi}^{2s}\widehat{u}(\xi),
	\end{equation*}
	where 
	\[
	c_1(s)=i\left(\frac{1-i}{2}\right)^{4s-2}\frac{\Gamma(1-s)}{\Gamma(s)}.
	\]
	This proves \eqref{anon0}, and we are left with showing the asymptotic properties~\eqref{asympt} of the symbol $S_s(\xi)$ defined in~\eqref{S_s}. 
	
	First, we recall the notation in~\eqref{S_s_tilde}. 
	{F}rom the Taylor expansion near 0 of the
	Bessel functions of the first kind expressed in~\eqref{propJ}, we easily deduce that $\widetilde{S}_s(0)=0$ and
	\[
	\lim_{\abs{\xi}\to0}\frac{S_s(\xi)}{\abs{\xi}^2}=C_1,
	\] 
	where $C_1$ is a positive constant depending only on $s$. Moreover, $\widetilde{S}_s(\xi)$ is radially monotone increasing, since
\begin{equation}\label{der_Stilde}
	\widetilde{S}'_s(\xi)=c_2(s)\frac{\xi}{\abs{\xi}^2}\frac{1}{J_{s-1}^2(-i\abs{\xi})},
\end{equation}
	where $c_2(s)=2c_1(s)\sin(s\pi)/\pi$, and this also proves that $S_s(\xi)$ is radially monotone increasing in $\xi$. 
	
	Finally, from the properties of the Bessel function, we also know that $\widetilde{S}_s(\xi)$ is bounded, and we easily deduce that
	\[
	\lim_{\abs{\xi}\to+\infty}\frac{S_s(\xi)}{\abs{\xi}^{2s}}=C_2,
	\]
	where $C_2$ is a positive constant depending only on $s$. This proves~\eqref{asympt} and finishes the proof of Theorem \ref{thm_fourier}.
	\end{proof}
		
	We observe that if we take $a=0$ in~\eqref{anon0}, then $ c_1(1/2)=i $, and
		\begin{equation*}
			%Y_{\frac12}(-i\lvert\xi\rvert)=-\sqrt{\frac{2}{\pi}}\frac{\cos(-i\lvert\xi\rvert)}{(-i\lvert\xi\rvert)^\frac{1}{2}}, 
			J_{-1/2}(-i\abs{\xi})=\frac{1+i}{\sqrt{\pi\abs{\xi}}}\cosh(\abs{\xi})
			\qquad 
			J_{1/2}(-i\abs{\xi})=\frac{1-i}{\sqrt{\pi\abs{\xi}}}\sinh(\abs{\xi}).
		\end{equation*}
		Therefore
		\[
		\widehat{\mathcal{L}_0u}(\xi)=\frac{e^{\lvert\xi\rvert}-e^{-\lvert\xi\rvert}}{e^{\lvert\xi\rvert}+e^{-\lvert\xi\rvert}}\abs\xi\widehat{u}(\xi),
		\]
		and we recover the special case~\eqref{a0}.
		
	To conclude this section, we deduce Corollary \ref{cor_equivalence} from Theorem \ref{thm_gammaconvergence}, providing an alternative form of the Dirichlet energy $\mathcal{E}_K$ associated to $\mathcal{L}_a$. 
		
	\begin{proof}[Proof of Corollary \ref{cor_equivalence}]
		Using the integration by parts formula and the fact
		that $v$ is a weak solution of \eqref{lvsis}, we have
		\begin{equation*}
		\mathcal{E}_K(v)=\frac12\int_{\R^n\times(0,1)}y^a\nabla v\cdot\nabla v\,dx\,dy=\frac12\int_{\R^n\times\{y=0\}}u\,\mathcal{L}_a(u)\,dx.
		\end{equation*}
		Applying Plancherel theorem and formula \eqref{anon0} for the Fourier transform of~$\mathcal{L}_a(u)$, we conclude that
		\begin{equation*}
		\mathcal{E}_K(v)=\frac1{2(2\pi)^n}\int_{\R^n}\widehat{\mathcal{L}_au}(\xi)\overline{\widehat{u}(\xi)}\,d\xi=\frac1{2(2\pi)^n}\int_{\R^n}S_s(\xi)\abs{\widehat{u}(\xi)}^2\,d\xi,
		\end{equation*}
	that concludes the proof of Corollary \ref{cor_equivalence}.	
\end{proof}

\section{$\Gamma$-convergence for $s\in[1/2,1)$}\label{sec_gamma_local}
This section is mainly devoted to the proof of Theorem \ref{thm_gammaconvergence} in the case $s\geq1/2$, that concerns the $\Gamma$-convergence of the functional $\mathcal{F}_\e$ defined in~\eqref{functional}. 

In the proof of the $\Gamma$-convergence result for $s\geq1/2$ we use the following Lemma~\ref{lemma_difference}
which establishes that the difference between the rescaled $H^s$ seminorm and the Dirichlet energy functional associated to the operator $\mathcal{L}_a$ is finite for every $u\in L^1(\R^n)$.
This result is valid for all~$s\in(0,1)$ and it will turn out to be useful
not only when~$s\in[1/2,1)$
to prove Theorem \ref{thm_gammaconvergence},
but 
also  when $s\in(0,1/2)$
to ensure that $\mathcal{F}$ is well-defined by \eqref{nonlocal_limit}.

\begin{lemma}\label{lemma_difference}
	For every $s\in(0,1)$ and $u\in L^1(\R^n)$, there exists a positive constant~$C$ depending only on $n$ and $s$ such that
	\[
	\int_{\R^n}\left(\widetilde{S}_s(\xi)-\mathcal{C}_s\right)\abs{\xi}^{2s}\abs{\widehat{u}(\xi)}^2\,d\xi\leq C\norm{u}^2_{L^1(\R^n)},
	\]
	where
	\begin{equation}\label{limit_Stilde}
	\mathcal{C}_s:=\lim_{\xi\to+\infty}\widetilde{S}_s(\xi)= 2^{1-2s}\frac{\Gamma(1-s)}{\Gamma(s)}.
	\end{equation}
\end{lemma}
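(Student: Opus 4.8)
The plan is to reduce everything to the asymptotic behavior of $\widetilde S_s(\xi)$ established in Theorem~\ref{thm_fourier}, combined with the elementary bound $|\widehat u(\xi)|\leq \|u\|_{L^1(\R^n)}$. First I would recall from \eqref{S_s_tilde} and the asymptotics \eqref{asympt} that $\widetilde S_s(\xi)\to \mathcal C_s$ as $|\xi|\to+\infty$, with $\mathcal C_s$ as in \eqref{limit_Stilde} (this value can be read off directly from \eqref{propJ}, since $\widetilde S_s(\xi)=c_1(s)J_{1-s}(-i|\xi|)/J_{s-1}(-i|\xi|)$ and the dominant terms of the two Bessel functions for large argument, or alternatively the known hyperbolic-function identities, yield the ratio $2^{1-2s}\Gamma(1-s)/\Gamma(s)$ after simplification with \eqref{c1}). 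Near the origin, \eqref{S_s_tilde} together with \eqref{asympt} gives $\widetilde S_s(\xi)\sim \mathrm{const}\cdot|\xi|^{2-2s}$, so in particular $\widetilde S_s(0)=0$ and $\widetilde S_s$ is continuous on all of $\R^n$.

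The key structural observation is that the integrand $\big(\widetilde S_s(\xi)-\mathcal C_s\big)|\xi|^{2s}$ is \emph{bounded} on $\R^n$: since $\widetilde S_s$ is radially monotone increasing (shown via \eqref{der_Stilde}) and converges to $\mathcal C_s$ at infinity, we have $\widetilde S_s(\xi)-\mathcal C_s\leq 0$ everywhere, so $\big(\widetilde S_s(\xi)-\mathcal C_s\big)|\xi|^{2s}\leq 0$; and for a lower bound, the worst case is near the origin, where $|\widetilde S_s(\xi)-\mathcal C_s|\leq \mathcal C_s$ and $|\xi|^{2s}$ is small, while for large $|\xi|$ we need that the difference $\mathcal C_s-\widetilde S_s(\xi)$ decays faster than $|\xi|^{-2s}$. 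This decay rate is exactly the point that requires the finer expansion of the Bessel ratio: from the large-argument asymptotics of $J_{1-s}$ and $J_{s-1}$ one gets $\widetilde S_s(\xi)=\mathcal C_s\big(1+O(e^{-2|\xi|})\big)$, or at the very least $\mathcal C_s-\widetilde S_s(\xi)=O(|\xi|^{-N})$ for every $N$, which certainly beats $|\xi|^{-2s}$. Hence there is a constant $C=C(n,s)$ with $\big|\big(\widetilde S_s(\xi)-\mathcal C_s\big)|\xi|^{2s}\big|\leq C$ for all $\xi\in\R^n$.

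Granting that uniform bound, the proof concludes in two lines. For $u\in L^1(\R^n)$ one has the standard estimate $|\widehat u(\xi)|^2\leq \|u\|_{L^1(\R^n)}^2$ pointwise, but that alone is not integrable over $\R^n$; so instead I would split $\R^n$ into a bounded ball $B_R$ and its complement. On $B_R$ I bound the integrand by $C\|u\|_{L^1}^2$ times $|B_R|$. On $\R^n\setminus B_R$ I use the sharper decay $\big(\mathcal C_s-\widetilde S_s(\xi)\big)|\xi|^{2s}=O(|\xi|^{-N})$ for $N$ large (say $N>n$), together with $|\widehat u(\xi)|^2\leq\|u\|_{L^1}^2$, to make the tail integral finite; and since the integrand is nonpositive outside $B_R$ anyway, one actually only needs the trivial upper bound by $0$ there, so the estimate $\int_{\R^n}\big(\widetilde S_s(\xi)-\mathcal C_s\big)|\xi|^{2s}|\widehat u(\xi)|^2\,d\xi\leq \int_{B_R}C\|u\|_{L^1}^2\,d\xi=C'\|u\|_{L^1(\R^n)}^2$ follows immediately with $C'=C|B_R|$ depending only on $n$ and $s$.

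The main obstacle is the quantitative claim that $\big(\widetilde S_s(\xi)-\mathcal C_s\big)|\xi|^{2s}$ stays bounded (equivalently, that $\mathcal C_s-\widetilde S_s(\xi)$ decays at least like $|\xi|^{-2s}$) as $|\xi|\to\infty$: the asymptotic relation \eqref{asympt} as literally stated only gives $S_s(\xi)/|\xi|^{2s}\to C_2$, i.e. $\widetilde S_s(\xi)\to\mathcal C_s$, without a rate. I would supply the missing rate from the classical large-argument expansions of the modified Bessel functions $I_\nu$ (recalling $J_\nu(-ix)$ is, up to an explicit constant, $I_\nu(x)$), which give $I_\nu(x)=\frac{e^x}{\sqrt{2\pi x}}\big(1+O(1/x)\big)$ uniformly in the two relevant orders $\nu=1-s$ and $\nu=s-1$, so that the ratio $J_{1-s}(-i|\xi|)/J_{s-1}(-i|\xi|)$ converges to its limit with an error $O(1/|\xi|)$ — more than enough. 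Everything else is routine.
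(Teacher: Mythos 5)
Your overall plan --- reduce everything to a quantitative decay estimate for $\mathcal{C}_s-\widetilde S_s$ at infinity, then split $\R^n$ into a ball and its complement using only $\abs{\widehat u}\le\norm{u}_{L^1(\R^n)}$ --- is viable and close in spirit to the paper, which likewise reduces the lemma to the finiteness of $\int_{\R^n}\abs{\widetilde S_s(\xi)-\mathcal{C}_s}\abs{\xi}^{2s}\,d\xi$. The genuine gap is in the step where you ``supply the missing rate''. You correctly state that the tail needs $(\mathcal{C}_s-\widetilde S_s(\xi))\abs{\xi}^{2s}=O(\abs{\xi}^{-N})$ with $N>n$, but the justification you give --- $I_\nu(x)=\frac{e^x}{\sqrt{2\pi x}}\left(1+O(1/x)\right)$ for $\nu=1-s$ and $\nu=s-1$, hence a ratio error $O(1/\abs{\xi})$, declared ``more than enough'' --- yields only $\mathcal{C}_s-\widetilde S_s(\xi)=O(\abs{\xi}^{-1})$, i.e. $(\mathcal{C}_s-\widetilde S_s(\xi))\abs{\xi}^{2s}=O(\abs{\xi}^{2s-1})$, which is not even bounded when $s>1/2$ and is never integrable on $\R^n\setminus B_R$; it falls far short of what your own tail estimate requires. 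The exponential closeness $\widetilde S_s(\xi)=\mathcal{C}_s\left(1+O(e^{-2\abs{\xi}})\right)$ that you assert earlier is true, but it cannot be read off the leading-order expansion: the $O(1/x)$ corrections of $I_{1-s}$ and $I_{s-1}$ (indeed all algebraic terms of the expansion, which depend only on $\nu^2$) coincide and cancel in the ratio, and the true size of the difference is governed by the connection formula $I_{-\nu}(x)-I_\nu(x)=\tfrac{2}{\pi}\sin(\nu\pi)K_\nu(x)$, with $K_\nu(x)/I_\nu(x)=O(e^{-2x})$. Without invoking this (or an equivalent fact), the rate you actually justify is insufficient and the tail estimate fails.

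The paper avoids this delicate cancellation altogether: it bounds $\abs{\widetilde S_s(r)-\mathcal{C}_s}\le\int_r^{+\infty}\abs{\widetilde S_s'(t)}\,dt$, uses the exact derivative formula \eqref{der_Stilde}, exchanges the order of integration, and is left with $\int_0^{+\infty}t^{n-1+2s}J_{s-1}^{-2}(-it)\,dt$, whose finiteness needs only the exponential growth of a single modified Bessel function, with no comparison between two expansions. Replacing your last paragraph by either that argument or the $K_\nu$ identity above closes the gap, and the rest of your proof is fine. A minor remark: since $\widetilde S_s\le\mathcal{C}_s$, the inequality as literally stated is trivially true (the left-hand side is nonpositive); the estimate actually needed later in Section~\ref{sec_gamma_local} is the absolute-value version $\int_{\R^n}\abs{\widetilde S_s(\xi)-\mathcal{C}_s}\abs{\xi}^{2s}\abs{\widehat u(\xi)}^2\,d\xi\le C\norm{u}^2_{L^1(\R^n)}$, so it is right that your ball/tail argument aims at controlling the modulus rather than exploiting the sign.
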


\begin{proof}
	First, we observe that $\norm{\widehat{u}}_{L^\infty(\R^n)}\leq C \norm{u}_{L^1(\R^n)}$, for some positive constant $C$ depending only on $n$. Therefore, we have that
	\[
	\int_{\R^n}\left(\widetilde{S}_s(\xi)-\mathcal{C}_s\right)\abs{\xi}^{2s}\abs{\widehat{u}(\xi)}^2\,d\xi\leq C\norm{u}^2_{L^1(\R^n)}\int_{\R^n}\left(\widetilde{S}_s(\xi)-\mathcal{C}_s\right)\abs{\xi}^{2s}\,d\xi,	
	\]
	and we want to show that the integral in the right-hand side is finite.
	
	If $s=1/2$, the expression of $\widetilde{S}_{1/2}(\xi)$ is simpler, and one can directly check that $\mathcal{C}_{1/2}=1$ and
	\begin{equation*}
	\int_{\R^n}\left(\frac{e^{\lvert\xi\rvert}-e^{-\lvert\xi\rvert}}{e^{\lvert\xi\rvert}+e^{-\lvert\xi\rvert}}-1\right)\abs{\xi}\,d\xi= C\in(0,+\infty),
	\end{equation*}
	where the constant $C$ depends only on $n$.
	
For the general case of any $s\in(0,1)$, we want to show that there exists a positive constant~$C$ depending only on $n$ and $s$ such that
	\begin{equation}\label{finite:int}
	\int_{\R^n}\left(\widetilde{S}_s(\xi)-\mathcal{C}_s\right)\abs{\xi}^{2s}\,d\xi= C\in(0,+\infty).
	\end{equation}
	To this end, we can use polar coordinates and write the integral as
	\begin{equation}\label{INT:diff}
	\begin{split}
	&\int_{\R^n}\left(\widetilde{S}_s(\xi)-\mathcal{C}_s\right)\abs{\xi}^{2s}\,d\xi
	=
	\omega_{n-1}\int_{0}^{+\infty}\big(\widetilde{S}_s(r)-\widetilde{S}_s(+\infty)\big)r^{n-1+2s}\,dr
	\\
	&\hspace{1.3cm}\leq\omega_{n-1}\int_{0}^{+\infty}r^{n-1+2s}\,dr\int_{r}^{+\infty}\abs{\widetilde{S}'_s(t)}\,dt
	\\
	&\hspace{1.3cm}=\omega_{n-1}\int_{0}^{+\infty}\abs{\widetilde{S}'_s(t)}\,dt\int_{0}^{t}r^{n-1+2s}\,dr=\frac{\omega_{n-1}}{n+2s}\int_{0}^{+\infty}t^{n+2s}\abs{\widetilde{S}'_s(t)}\,dt.
	\end{split}
	\end{equation}
	Using~\eqref{der_Stilde} to compute $\widetilde{S}'_s(t)$, from \eqref{INT:diff} we deduce
	\begin{equation*}
	\int_{\R^n}\left(\widetilde{S}_s(\xi)-\mathcal{C}_s\right)\abs{\xi}^{2s}\,d\xi
	\leq 
	C\,c_2(s)\int_{0}^{+\infty}\frac{t^{n-1+2s}}{J^2_{s-1}(-it)}\,dt,
	\end{equation*}
	where $C$ is a positive constant depending only on $n$ and $s$. 
	Finally, the last integral is finite, since the integrand is bounded, and goes to zero at infinity faster than every power. This shows \eqref{finite:int} and concludes the proof of Lemma~\ref{lemma_difference}.
\end{proof}

Before proving Theorem \ref{thm_gammaconvergence} for $s\geq1/2$, we recall the setting in~\cite{SV} used by Savin and the second author to state their $\Gamma$-convergence result. Indeed, we prove Theorem \ref{thm_gammaconvergence} for $s\geq1/2$ by showing that the difference between the rescaled Dirichlet energies goes to zero at the limit, and then applying~\cite[Theorem~1.4]{SV}. 

We recall that the energy functional considered in~\cite{SV} is $\mathcal{I}_\e$ defined in~\eqref{SV_functional}. After a rescaling, we can assume that the double-well potential $V$ in~\eqref{SV_functional} satisfies~\eqref{double_well_potential},
%$$V(0)=V(1)=0<V(r) \qquad{\mbox{ for all }}r\in\R\setminus\{0,1\},$$
and that the function space in~\cite{SV} is defined as $Y:=\{u\in L^\infty(\R^n): 0\leq u\leq1\}$. Following \cite{SV}, we say that $u_\e$ converges to $u$ in $Y$ if $u_\e\to u$ in $L^1_\textnormal{loc}(\R^n)$. 

Observe that our function space $X$ is contained in $Y$ and $X$ is equipped with the convergence in $L^1(\R^n)$. Thus, every time we consider a function $u$ in $X$ and a sequence $u_\e$ converging to $u$ in $X$, we are also in the setting considered in~\cite{SV}, and thus we are able to
exploit useful results from the existing literature. 

In respect to this matter,
we recall that in~\cite{SV} the functional~$\mathcal{I}_\e$
in~\eqref{SV_functional} is rescaled as
\begin{equation}\label{SV_rescaled_functional}
\mathcal{G}_\e(u,\Omega):=\begin{cases}
\begin{aligned}
&\e^{-2s}\mathcal{I}_\e(u,\Omega) \qquad &\text{if}\,\,s\in(0,1/2); \\
&\abs{\e\log\e}^{-1}\mathcal{I}_\e(u,\Omega) \qquad &\text{if}\,\,s=1/2; \\
&\e^{-1}\mathcal{I}_\e(u,\Omega) \qquad &\text{if}\,\,s\in(1/2,1).
\end{aligned}
\end{cases}
\end{equation}
Theorem 1.4 in~\cite{SV} establishes that $\mathcal{G}_\e$ converges in the $\Gamma$-sense to the classical perimeter if~$ s\in\left[1/2,1\right) $ and to the nonlocal area functional if~$ s\in\left(0,1/2 \right)$. More precisely, the $\Gamma$-limit functional in~\cite{SV} is  defined for $s\in(0,1/2)$ as 
\begin{equation*}%\label{SV_gamma_limit_nonlocal}
\mathcal{G}(u,\Omega):=\begin{cases}
\begin{aligned}
&\mathcal{K}(u,\Omega) \qquad &\text{if}\,\,u_{|\Omega}=\chi_E\,\,\text{for some set}\,\,E\subset\Omega; \\
&+\infty \qquad &\text{otherwise},
\end{aligned}
\end{cases}
\end{equation*} 
and for $s\in[1/2,1)$ as
\begin{equation}\label{SV_gamma_limit}
\mathcal{G}(u,\Omega):=\begin{cases}
\begin{aligned}
&c_*\textnormal{Per}(E,\Omega) \qquad &\text{if}\,\,u_{|\Omega}=\chi_E\,\,\text{for some set}\,\,E\subset\Omega; \\
&+\infty \qquad &\text{otherwise},
\end{aligned}
\end{cases}
\end{equation}
where $c_*$ is a constant depending only on $n$, $s$ and the double-well potential $V$ --- see~\cite{SV} for more details.

We are now able to prove Theorem~\ref{thm_gammaconvergence} for $s\geq1/2$.

\begin{proof}[Proof of Theorem~\ref{thm_gammaconvergence} for $\mathit{s\in[1/2,1)}$.] 
	First, considering the functional $\mathcal{F}_\e$ defined in~\eqref{functional}, we introduce the following notation for the $\e$-weights
\begin{equation*}
%\label{lambda_e}
\lambda(\e):=
\begin{cases}
\begin{aligned}
&\abs{\log\e}^{-1} \qquad &\text{if}\,\,s=1/2; \\
&\e^{2s-1}\qquad &\text{if}\,\,s\in\left(1/2,1\right);
\end{aligned}
\end{cases}
\end{equation*}
and
\begin{equation*}
%\label{kappa_e}
\kappa(\e):=
\begin{cases}
\begin{aligned}
&\abs{\e\log\e}^{-1} \qquad &\text{if}\,\,s=1/2; \\
&\e^{-1}\qquad &\text{if}\,\,s\in\left(1/2,1\right).
\end{aligned}
\end{cases}
\end{equation*}
Observe that the same $\e$-weights appear in the functional $\mathcal{G}_\e$ defined in~\eqref{SV_functional}, which is treated in~\cite{SV}. In this proof, we will exploit several times the fact that $\lambda(\e)\to0$ as $\e\to0^+$.

We recall that the square of the $H^{s}$-seminorm can be written as
\[
\seminorm{u}^2_{H^{s}(\R^n)}=
\iint_{\R^n\times\R^n}\frac{\abs{u(x)-u(y)}^2}{\abs{x-y}^{n+2s}}\,dx\,dy
=\frac{2C(n,s)^{-1}}{(2\pi)^n}\int_{\R^n}\abs{\xi}^{2s}\abs{\widehat{u}(\xi)}^2\,d\xi,
\]
where $C(n,s)$ is defined in~\eqref{C(n,s)}.

We consider $\mathcal{F}_\e(u_\e)$ and we use
the notation in~\eqref{S_s_tilde}. The limit at infinity of $ \widetilde{S}_s $ is denoted with $\mathcal{C}_s$ --- see~\eqref{limit_Stilde} and also Figure~\ref{figure_S_tilde} --- and it is finite and positive for every $s\in(0,1)$, then in particular in our case. 
We define
\begin{equation}\label{constant}
\overline{C}_s:=2^{n-1}\pi^n C(n,s)\,\mathcal{C}_s,
\end{equation} 
and we add and subtract $ \lambda(\e)\overline{C}_s\seminorm{u}^2_{H^s(\R^n)} $ to $\mathcal{F}_\e(u_\e)$. In this way we obtain
\begin{equation*}
\begin{split}
&\hspace{-0.7cm}\mathcal{F}_\e(u_\e)=\lambda(\e)\int_{\R^n}\abs{\xi}^{2s}\left(	\widetilde{S}_s(\xi)-\mathcal{C}_s	\right)\abs{\widehat{u}_\e(\xi)}^2\,d\xi
\\
&\hspace{1.5cm}+\lambda(\e)\overline{C}_s\iint_{\R^n\times\R^n}\frac{\abs{u_\e(x)-u_\e(y)}^2}{\abs{x-y}^{n+2s}}\,dx\,dy
+\kappa(\e)\int_{\R^n}W(u_\e)\,dx.
\end{split}
\end{equation*}
Using Lemma \ref{lemma_difference} and the fact that $\lambda(\e)\to0$ as $\e\to0^+$,  we deduce that for every $ u\in X$ and for every sequence $ (u_\e)_\e $ converging to $u$ in $L^1(\R^n)$, it holds that 
\[
\lim_{\e\to0^+}\lambda(\e)\int_{\R^n}\abs{\xi}^{2s}\left(	\widetilde{S}_s(\xi)-\mathcal{C}_s	\right)\abs{\widehat{u}_\e(\xi)}^2\,d\xi=0,
\]
%\begin{equation*}
%\lim_{\e\to0^+}\frac{1}{\abs{\log\e}}\int_{\R^n}\abs{\xi}\left(\frac{e^{\lvert\xi\rvert}-e^{-\lvert\xi\rvert}}{e^{\lvert\xi\rvert}+e^{-\lvert\xi\rvert}}-1\right)\abs{\widehat{u}_\e(\xi)}^2\,d\xi=0.
%\end{equation*}
%To show this, observe that up to a subsequence $ (\widehat{u}_\e)_\e $ is uniformly bounded in $ L^\infty(\R^n)$, since $ u_\e\longrightarrow u $ in $ L^1(\R^n) $. Moreover, since the integral
%\begin{equation*}
%\int_{\R^n}\abs{\xi}\left(\frac{e^{\lvert\xi\rvert}-e^{-\lvert\xi\rvert}}{e^{\lvert\xi\rvert}+e^{-\lvert\xi\rvert}}-1\right)\,d\xi,
%\end{equation*}
%is finite in any dimension $n\geq1$, we conclude~\eqref{claim0}. 
Therefore, for every $u\in X$, if~$u_\e\to u$ in $L^1(\R^n)$, we have that
\begin{equation}\label{F_tilde}
\lim_{\e\to0^+}\left(\mathcal{F_\e}(u_\e)-\widetilde{\mathcal{F}}_\e(u_\e)\right)=0, 
\end{equation}
where
\begin{equation}\label{4.6bis}
\begin{split}
\widetilde{\mathcal{F}}_\e(w)&:=\lambda(\e)\,\mathcal{C}_s\int_{\R^n}\abs{\xi}^{2s}\abs{\widehat{w} (\xi)}^2\,d\xi
+\kappa(\e)\int_{\R^n}W(w)\,dx
\\
&=\lambda(\e)\,\overline{C}_{s}\iint_{\R^n\times\R^n}\frac{\abs{w(x)-w(y)}^2}{\abs{x-y}^{n+2s}}\,dx\,dy
+\kappa(\e)\int_{\R^n}W(w)\,dx,
\end{split}
\end{equation}
and $\overline{C}_{s}$ is defined in~\eqref{constant}.

Now, we use \eqref{F_tilde} and the $\Gamma$-convergence result in~\cite{SV} to deduce
the claims in~(i) and~(ii) of
Theorem~\ref{thm_gammaconvergence}. To this end,
we start from the liminf inequality in~(i).
 
%The $\Gamma$-convergence in~\cite{SV} is proved in the setting $u\in \{L^\infty(\R^n)\,\,s.t.\,\,\norm{u}_{L^\infty(\R^n)}\leq1\} $, and $ u_\e\to u $ in $ L^1_{\textnormal{loc}} $. These conditions are satisfied in particular by every $u\in X$ and every sequence $u_\e$ converging to $u$ in $X$. Therefore, 
For every function $u\in X$ we can choose a radius $R>0$ such that the ball $B_R\subset\R^n$ contains the support of $u$. Moreover, for any sequence $(u_\e)_\e$ that converges to $u$ in $L^1(\R^n)$, from Theorem 1.4 in~\cite{SV} we know that 
\begin{equation}\label{SV_liminf}
\liminf_{\e\to0^+}\mathcal{G}_\e(u_\e,B_R)\geq \mathcal{G}(u,B_R),
\end{equation}
where $\mathcal{G}(u,\Omega)$ and $\mathcal{G}_\e(u,\Omega)$ are defined respectively in~\eqref{SV_gamma_limit} and \eqref{SV_rescaled_functional}. In addition, by the definition of~$\widetilde{\mathcal{F}}_\e$
in~\eqref{4.6bis},
for every~$R>0$, we have that
\[
\iint_{\R^n\times\R^n}\frac{\abs{u_\e(x)-u_\e(y)}^2}{\abs{x-y}^{n+2s}}\,dx\,dy\geq \mathcal{K}(u,B_R),
\]
where $\mathcal{K}(u,B_R)$ appears in the definition of $\mathcal{G}_\e(u,B_R)$ given in~\eqref{SV_rescaled_functional}. In particular, it follows that
\begin{equation}\label{Gamma_89}
\widetilde{\mathcal{F}}_\e(u_\e)\geq\overline{C}_{s}\,\mathcal{G}_\e(u_\e,B_R),
\end{equation}
where $B_R\subset\R^n$ is the ball of radius $R$ containing the support of $u$.
We observe that both $\widetilde{\mathcal{F}}_\e$ and $\mathcal{G}_\e$ contain a double-well potential, and without loss of generality we can assume that
\begin{equation}\label{potentials}
W=\overline{C}_{s}V, 
\end{equation}
where $V$ is the potential function
in the definition of $\mathcal{G}_\e$ (recall~\eqref{SV_functional}
and~\eqref{SV_rescaled_functional}).

Then, using~\eqref{F_tilde}, \eqref{SV_liminf}, and \eqref{Gamma_89}, it follows that 
\begin{equation*}%\label{disug_liminf}
\begin{split}
&\liminf_{\e\to0^+}\mathcal{F_\e}(u_\e)=\liminf_{\e\to0^+}\widetilde{\mathcal{F}}_\e(u_\e)
\\
&\hspace{3.5cm}
\geq\overline{C}_{s}\,\liminf_{\e\to0^+}\mathcal{G}_\e(u_\e,B_R)\geq \overline{C}_{s}\,\mathcal{G}(u,B_R)=\mathcal{F}(u),
\end{split}
\end{equation*}
which is the liminf inequality~\eqref{liminf} for a sequence $(u_\e)_\e$ converging to $u$ in $X$.
\medskip

Now, we prove the limsup inequality in claim~(ii)
of Theorem~\ref{thm_gammaconvergence}. For this,
we can assume that  \begin{equation}\label{FINITE}{\mbox{$ u=\chi_E $ for some set $ E\subset\R^n $, and $
\mathcal{F}(u)<+\infty$,}}\end{equation} otherwise the claim
in~(ii) is automatically satisfied.
 
In light of the definition of $X$ given in~\eqref{1.19bis},
since $u$ has compact support in $\R^n$, we can choose $R>2$ large enough such that 
\begin{equation}\label{SUPPO}
{\mbox{the support of $u$ is compactly contained in $B_{R/2}$.}}\end{equation} Moreover, from Theorem 1.4 in~\cite{SV} we know the existence of a sequence $u_\e$ that converges to $u$ in $B_{R}$ such that
\begin{equation}\label{SV_limsup}
\limsup_{\e\to0^+}\overline{C}_{s}\,\mathcal{G}_\e(u_\e,B_{R})\leq\overline{C}_{s}\,\mathcal{G}(u,B_{R})=\mathcal{F}(u),
\end{equation}
where the last equality follows from the definitions of $\mathcal{G}$ and $\mathcal{F}$, the fact that $u=\chi_E$, and that the support of $u$ is contained in $B_R$.

Besides, since $u_\e$ converges to $u$ in $L^1(B_{R})$, for every $k$ there exists $\e_k\in(0,1/k)$ such that
\begin{equation}\label{5_13}
\int_{B_{R}}\abs{u-u_{\e_k}}\,dx\leq\frac1k.
\end{equation}
In view of~\eqref{SV_limsup} we can also suppose that
\begin{equation}\label{5_56}
\overline{C}_{s}\,\mathcal{G}_{\e_k}(u_{\e_k},B_{R})\leq\mathcal{F}(u)+\frac1k.
\end{equation}
Now, for every $k\in\mathbb{N}\setminus\{0\}$, we define 
\begin{equation}\label{RHok}
\rho_k:=\frac1{k R^{n-1}}
\end{equation}
and
\begin{equation*}
u_k^*:=u_{\e_k}\psi_k,
\end{equation*}
where $\psi_k$ is a smooth function defined on $\R^n$ with values in $[0,1]$, such that 
\begin{equation}\label{psiK}
{\mbox{$\psi_k\equiv1$ in $B_{R-\rho_k}$,}}\qquad
{\mbox{$\psi_k\equiv0$ outside $B_{R}$,}}\qquad
{\mbox{and}}\qquad |\nabla\psi_k|\leq \frac{C}{\rho_k}.\end{equation}
Then, $u_k^*\in X $, and 
we claim that
\begin{equation}\label{8IAS}
{\mbox{$u_k^*$ converges to $u$ in $ L^1(\R^n) $}}.\end{equation}
Indeed, using \eqref{5_13} and that the support of $u$ is contained in $B_R$, we know that
\begin{equation*}
\begin{split}
\int_{\R^n}\abs{u_k^*-u}\,dx
&=
\int_{B_{R}-\rho_k}\abs{u_{\e_k}-u}\,dx+
\int_{B_{R}\setminus B_{R}-\rho_k}\abs{u_{\e_k}\psi_k-u}\,dx
%+\int_{\mathscr{C}B_{R}}\abs{u}\,dx
\\
&\le\frac1k+
2\big|B_{R}\setminus B_{R-\rho_k}\big|
%+\frac1k
\\
&\le\frac1k+C\big( R^n-(R-\rho_k)^n\big),
\end{split}
\end{equation*}
for some~$C>0$ depending only on $n$.

This and~\eqref{RHok} yield that
\begin{equation*} \int_{\R^n}\abs{u_k^*-u}\,dx\le\frac1k+
CR^n \left(1- \left(1-\frac{\rho_k}{R}\right)^n\right)\le \frac1k+CR^{n-1}\rho_k=\frac{C}{k}.
\end{equation*}
{F}rom this, we plainly obtain~\eqref{8IAS}, as desired.

Now, we recall that $$ \limsup_{k\to+\infty}\mathcal{G}_{\e_k}(u_{\e_k},B_{R})<+\infty,$$ thanks to~\eqref{SV_limsup} and the assumption in~\eqref{FINITE}. We claim that
\begin{equation}\label{5_claim}
\limsup_{k\to\infty}\overline{C}_{s}\,\mathcal{G}_{\e_k}(u_{\e_k},B_{R})\geq\limsup_{k\to\infty}\widetilde{\mathcal{F}}_{\e_k}(u^*_k).
\end{equation}

To this end, recalling also \eqref{potentials}, we
observe that
\begin{equation}\label{difference}
\begin{split}
&\overline{C}_{s}\,\mathcal{G}_{\e_k}(u_{\e_k},B_{R})-\widetilde{\mathcal{F}}_{\e_k}(u^*_k)
\\
&\hspace{0.6cm}=I_k+I\!I_k+I\!I\!I_k+I\!V_k
+\kappa(\e_k)\int_{B_{R}\setminus B_{R-\rho_k}}\left(W(u_{\e_k})-W(u_{\e_k}\psi_k)\right)\,dx,
\end{split}
\end{equation}
where $I_k$, $I\!I_k$, $I\!I\!I_k$, and $I\!V_k$ are defined as
\begin{equation*}
\begin{split}
&I_k:=
2\overline{C}_{s}\lambda(\e_k)\iint_{B_{R-\rho_k}\times \left(B_{R}\setminus B_{R-\rho_k}\right)}\frac{\left(u_{\e_k}(x)-u_{\e_k}(y)\right)^2	}{\abs{x-y}^{n+2s}}
\\
&\hspace{7.4cm}-\frac{\left(u_{\e_k}(x)-\psi_k(y)u_{\e_k}(y)\right)^2}{\abs{x-y}^{n+2s}}\,dx\,dy;
%\\\vspace{1mm}
\\
&I\!I_k:=2
\overline{C}_{s}\lambda(\e_k)\iint_{B_{R-\rho_k}\times\mathscr{C}B_{R}}\frac{u_{\e_k}(y)\left(u_{\e_k}(y)-2u_{\e_k}(x)	\right)	}{\abs{x-y}^{n+2s}}\,dx\,dy;
%\\\vspace{1mm}
\\
&I\!I\!I_k:=2\overline{C}_{s}\lambda(\e_k)\iint_{(B_{R}\setminus B_{R-\rho_k})\times\mathscr{C}B_{R}}
\frac{\left(u_{\e_k}(x)-u_{\e_k}(y)\right)^2-u^2_{\e_k}(x)\psi^2_k(x)}{\abs{x-y}^{n+2s}}
\,dx\,dy;
%\\\vspace{1mm}
\\
&I\!V_k:=\overline{C}_{s}\lambda(\e_k)\iint_{(B_{R}\setminus B_{R-\rho_k})^2}\frac{\left(u_{\e_k}(x)-u_{\e_k}(y)\right)^2}{\abs{x-y}^{n+2s}}
\\
&\hspace{6.4cm}
-\frac{\left(u_{\e_k}(x)\psi_k(x)-u_{\e_k}(y)\psi_k(y)\right)^2}{\abs{x-y}^{n+2s}}
\,dx\,dy.
\end{split}
\end{equation*}
First, we consider the difference of the potential energies in~\eqref{difference}. We claim that
\begin{equation}\label{potential_en}
\limsup_{k\to\infty}\,\kappa(\e_k)\int_{B_{R}\setminus B_{R-\rho_k}}
\left(W(u_{\e_k})-W(u_{\e_k}\psi_k)\right)\,dx\geq 0.
\end{equation}
To show this, first we recall that we are assuming that $\mathcal{F}(u)$ is finite, therefore $u=\chi_E$ for some set $E\subset\R^n$. We also remind that the recovery sequence $(u_\e)_\e$ is defined in~\cite{SV} as
\[
u_\e:=u_0\left(\frac{\textnormal{dist}(x)}{\e}\right),
\] 
where $u_0$ is the heteroclinic connecting the zeros of the potential $W$, i.e.\,0 and 1, and $\textnormal{dist}(x)$ is the signed distance of $x$ to $\partial E$, with the convention that $\textnormal{dist}(x)\geq0$ inside $E$ and $\textnormal{dist}(x)\leq0$ outside $E$
(see in particular~\cite[page~497]{SV}).

We remark that, in view of~\eqref{FINITE} and~\eqref{SUPPO}, we have that~$\chi_E=u=0$ outside~$B_{R/2}$,
hence~$E\subseteq B_{R/2}$.
In particular, if~$x$ lies outside~$B_{3R/4}$, we have that ${\textnormal{dist}(x)}\le -R/4$.
Hence, for $k$ big enough, we can assume that $u_{\e_k}$ is arbitrarily close to zero in $B_{R}\setminus B_{R-\rho_k}$. On the other hand, since $W$ is a double-well potential --- see \eqref{double_well_potential} --- it follows that $W'(t)\geq0$ for $t$ near zero. Therefore, since $u_{\e_k}\psi_k\leq u_{\e_k}$, for $k$ big enough we have that
%Therefore, $u_{\e_k}\in(0,1/2)$ outside the support of $u$. Since $u\in X$ has compact support in $\R^n$, we can assume that, for $k$ big enough, $u_{\e_k}\in(0,1/2)$ almost everywhere in $B_{R}\setminus B_{R-\rho_k}$. Since $\psi_k\in(0,1)$, this means that $u_{\e_k}\psi_k\leq u_{\e_k}\leq1/2$, thus 
\[
W(u_{\e_k})-W(u_{\e_k}\psi_k)\geq0\qquad\quad\text{in}\quad B_{R}\setminus B_{R-\rho_k},
\]
and this shows \eqref{potential_en}.

Considering now the integral in~$I_k$ in~\eqref{difference}, we observe that 
\begin{equation*}
\begin{split}
&\hspace{-1cm}\left(u_{\e_k}(x)-u_{\e_k}(y)\right)^2-\left(u_{\e_k}(x)-\psi_k(y)u_{\e_k}(y)\right)^2
\\
&\hspace{1.5cm}=\left(1-\psi^2_k(y)\right)u_{\e_k}^2(y)-2u_{\e_k}(x)u_{\e_k}(y)\left(1-\psi_k(y)\right)
\\
&\hspace{1.5cm}=u_{\e_k}(y)(1-\psi_k(y))\big((1+\psi_k(y))u_{\e_k}(y)-2u_{\e_k}(x)\big)
\end{split}
\end{equation*}
Since in this case
we are integrating $x$ over $B_{R-\rho_k}$, we have that $\psi_k(x)=1$. Hence,
we can write that
\begin{equation*}
\begin{split}
&\hspace{-1cm}(1-\psi_k(y))\big((1+\psi_k(y))u_{\e_k}(y)-2u_{\e_k}(x)\big)
\\
&=
(\psi_k(x)-\psi_k(y))\big((2-\psi_k(x)+\psi_k(y))u_{\e_k}(y)-2u_{\e_k}(x)\big)
\\
&=(\psi_k(x)-\psi_k(y))\big(
2(u_{\e_k}(y)-u_{\e_k}(x))
-(\psi_k(x)-\psi_k(y))u_{\e_k}(y)\big)
\\
&=2(\psi_k(x)-\psi_k(y))(u_{\e_k}(y)-u_{\e_k}(x))
-(\psi_k(x)-\psi_k(y))^2u_{\e_k}(y) .
\end{split}
\end{equation*}
Consequently,
using also that $u_{\e_k}$ is uniformly bounded, we see that
\begin{equation}\label{5_67}
\begin{split}
I_k\leq C\lambda(\e_k)\iint_{B_{R-\rho_k}\times \left(B_{R}\setminus B_{R-\rho_k}\right)}\frac{\left(\psi_k(x)-\psi_k(y)	\right)\left(u_{\e_k}(x)-u_{\e_k}(y)	\right)}{\abs{x-y}^{n+2s}}\,dx\,dy
\\
+C\lambda(\e_k)\iint_{B_{R-\rho_k}\times \left(B_{R}\setminus B_{R-\rho_k}\right)}\frac{\left(\psi_k(x)-\psi_k(y)\right)^2}{\abs{x-y}^{n+2s}}\,dx\,dy.
\end{split}
\end{equation}
To estimate the second integral in~\eqref{5_67}, we use~\eqref{psiK}
to deduce that $$\left(\psi_k(x)-\psi_k(y)\right)^2\leq \frac{C}{\rho_k^2}\abs{x-y}^2,$$ and we obtain that
\begin{equation}\label{LAs-2}
\begin{split}
&\iint_{B_{R-\rho_k}\times \left(B_{R}\setminus B_{R-\rho_k}\right)}\frac{\left(\psi_k(x)-\psi_k(y)\right)^2}{\abs{x-y}^{n+2s}}\,dx\,dy
\\
&\hspace{2cm}\leq
\frac{C}{\rho_k^2}\iint_{B_{R-\rho_k}\times \left(B_{R}\setminus B_{R-\rho_k}\right)}\frac{1}{\abs{x-y}^{n+2s-2}}\,dx\,dy=:\mu_k\in(0,+\infty)
.
\end{split}
\end{equation}
Observe that $\mu_k$ is finite since $s\in[1/2,1)$ and $\abs{x-y}^{-n-2s+2}$ is integrable.
Accordingly, we can choose $\e_k$ so small that $\lambda(\e_k)\le (k\mu_k)^{-1}$ and we conclude that
\begin{equation}\label{LAs-3}
\begin{split}
&\hspace{-1.5cm}\lim_{k\to+\infty}
C\lambda(\e_k)\iint_{B_{R-\rho_k}\times \left(B_{R}\setminus B_{R-\rho_k}\right)}\frac{\left(\psi_k(x)-\psi_k(y)\right)^2}{\abs{x-y}^{n+2s}}\,dx\,dy
\\ 
&\hspace{3.5cm}\le
\lim_{k\to+\infty}C\lambda(\e_k)\mu_k\le\lim_{k\to+\infty}
\frac{C}{k}
=0.\end{split}
\end{equation}
This controls the second integral
in~\eqref{5_67}.
Instead,
for the first integral in~\eqref{5_67}, we can use the Cauchy-Schwarz inequality and \eqref{SV_limsup}, to write that
\begin{equation*}
\begin{split}
&\lambda(\e_k)\iint_{B_{R-\rho_k}\times \left(B_{R}\setminus B_{R-\rho_k}\right)}\frac{\left(\psi_k(x)-\psi_k(y)	\right)\left(u_{\e_k}(x)-u_{\e_k}(y)	\right)}{\abs{x-y}^{n+2s}}\,dx\,dy
\\ 
&\hspace{0.5cm}\leq
\left(\lambda(\e_k)\iint_{B_{R-\rho_k}\times \left(B_{R}\setminus B_{R-\rho_k}\right)}\frac{\left(\psi_k(x)-\psi_k(y)	\right)^2}{\abs{x-y}^{n+2s}}\,dx\,dy\right)^\frac12\,
\\
&\hspace{2cm}\times
\left(\lambda(\e_k)\iint_{B_{R-\rho_k}\times \left(B_{R}\setminus B_{R-\rho_k}\right)}\frac{\left(u_{\e_k}(x)-u_{\e_k}(y)	\right)^2}{\abs{x-y}^{n+2s}}\,dx\,dy\right)^\frac12\\
&\hspace{0.5cm}\leq
\left(\lambda(\e_k)\iint_{B_{R-\rho_k}\times \left(B_{R}\setminus B_{R-\rho_k}\right)}\frac{\left(\psi_k(x)-\psi_k(y)	\right)^2}{\abs{x-y}^{n+2s}}\,dx\,dy\right)^\frac12
\left(\mathcal{F}(u)+\frac1k\right)^\frac12.
\end{split}
\end{equation*}
Hence, in view of~\eqref{FINITE} and~\eqref{LAs-2}-\eqref{LAs-3}, we write that
\begin{equation*}
\lim_{k\to+\infty}\lambda(\e_k)\iint_{B_{R-\rho_k}\times \left(B_{R}\setminus B_{R-\rho_k}\right)}\frac{\left(\psi_k(x)-\psi_k(y)	\right)\left(u_{\e_k}(x)-u_{\e_k}(y)	\right)}{\abs{x-y}^{n+2s}}\,dx\,dy
=0.
\end{equation*}
%Now, we can choose $\e_k$ so small that $\abs{\log\e_k}\ge k\mu_k$ and conclude that also the limit of the first integral in~\eqref{5_67} is zero.
{F}rom this and~\eqref{LAs-3}, we conclude that
\begin{equation}\label{I_k_limit}
\lim_{k\to+\infty} I_k=0.\end{equation}
Considering now the integral in $I\!I_k$,
we exploit that $u_{\e_k}$ is uniformly bounded and that
\begin{equation*}
\iint_{B_{R-\rho_k}\times\mathscr{C}B_{R}}\frac{1}{\abs{x-y}^{n+2s}}\,dx\,dy
%\le
%\iint_{B_{R-\rho_k}\times\mathscr{C}B_{R}}\frac{1}{\abs{z}^{n+1}}\,dx\,dz
=:\widetilde\mu_k\in(0,+\infty).
\end{equation*}
In this way, we conclude that
$$ I\!I_k\leq C\,\widetilde\mu_k \lambda(\e_k).$$
Consequently, choosing $\e_k$ so small that $\lambda(\e_k)\leq (k \widetilde{\mu}_k)^{-1}$, we conclude that
\begin{equation}\label{II_k_limit}
\lim_{k\to+\infty}I\!I_k\leq\lim_{k\to+\infty}\frac{C}{k}=0.
\end{equation}

Now, we consider the integral in $I\!I\!I_k$ and we claim that
\begin{equation}\label{III_k}
\limsup_{k\to\infty}I\!I\!I_k\geq0.
\end{equation}
To this end, it is sufficient to show that
\begin{equation}\label{III_k_1}
\lim_{k\to+\infty}\lambda(\e_k)\iint_{(B_{R}\setminus B_{R-\rho_k})\times\mathscr{C}B_{R}}
\frac{u^2_{\e_k}(x)\psi^2_k(x)}{\abs{x-y}^{n+2s}}
\,dx\,dy=0,
\end{equation}
since the other part of the integral in $I\!I\!I_k$ is positive.
 
We use that $u_{\e_k}$ is uniformly bounded and that $\psi_k(y)=0$ since we are integrating $y$ over $\mathscr{C}B_{R}$, to write
that
\begin{equation}\label{III_k_2}
\begin{split}
&\hspace{-0.5cm}\lambda(\e_k)\iint_{(B_{R}\setminus B_{R-\rho_k})\times\mathscr{C}B_{R}}
\frac{u^2_{\e_k}(x)\psi^2_k(x)}{\abs{x-y}^{n+2s}}
\,dx\,dy
\\
&\hspace{0.5cm}\leq C\lambda(\e_k)\iint_{(B_{R}\setminus B_{R-\rho_k})\times(B_{R+1}\setminus B_{R})}
\frac{\left(\psi_k(x)-\psi_k(y)\right)^2}{\abs{x-y}^{n+2s}}
\,dx\,dy
\\
&\hspace{2cm}+C\lambda(\e_k)\iint_{(B_{R}\setminus B_{R-\rho_k})\times\mathscr{C}B_{R+1}}
\frac{1}{\abs{x-y}^{n+2s}}
\,dx\,dy.
\end{split}
\end{equation}
To control the first integral in the right-hand side of \eqref{III_k_2},
%we use the same strategy as for the second integral in~\eqref{5_67}. Indeed, 
we use that $(\psi_k(x)-\psi_k(y))^2\leq C\abs{x-y}^2/\rho_k^2$, obtaining
\begin{equation*}
\begin{split}
&\iint_{(B_{R}\setminus B_{R-\rho_k})\times(B_{R+1}\setminus B_{R})}\frac{\left(\psi_k(x)-\psi_k(y)\right)^2}{\abs{x-y}^{n+2s}}\,dx\,dy
\\
&\hspace{2cm}\leq
\frac{C}{\rho_k^2}\iint_{(B_{R}\setminus B_{R-\rho_k})\times(B_{R+1}\setminus B_{R})}\frac{1}{\abs{x-y}^{n+2s-2}}\,dx\,dy=:\nu_k\in(0,+\infty)
.
\end{split}
\end{equation*}
%Again, we observe that $\nu_k$ is finite since $s\in[1/2,1)$ and $\abs{x-y}^{-n-2s+2}$ is integrable. 
Therefore, we can choose $\e_k$ so small that $\lambda(\e_k)\leq (k\nu_k)^{-1}$ and we deduce that
\begin{equation*}
\begin{split}
&\hspace{-1.5cm}\lim_{k\to+\infty}
C\lambda(\e_k)\iint_{(B_{R}\setminus B_{R-\rho_k})\times(B_{R+1}\setminus B_{R})}\frac{\left(\psi_k(x)-\psi_k(y)\right)^2}{\abs{x-y}^{n+2s}}\,dx\,dy
\\ 
&\hspace{3.5cm}\le
\lim_{k\to+\infty}C\nu_k\lambda(\e_k)\le\lim_{k\to+\infty}
\frac{C}{k}
=0.\end{split}
\end{equation*}
Concerning the last integral in~\eqref{III_k_2}, we integrate first $y$ over $\mathscr{C}B_{R+1}$, and then $x$ over $B_{R}\setminus B_{R-\rho_k}$, to obtain
%, we integrate in $y\in\mathscr{C}B_{R+1}$ and obtain
\begin{equation*}
\begin{split}
&\hspace{-1cm}\lim_{k\to+\infty}C\lambda(\e_k)\iint_{(B_{R}\setminus B_{R-\rho_k})\times\mathscr{C}B_{R+1}}
\frac{1}{\abs{x-y}^{n+2s}}
\,dx\,dy
\\
&\hspace{1cm}\leq\lim_{k\to+\infty}C\lambda(\e_k)\abs{B_{R}\setminus B_{R-\rho_k}}
\leq\lim_{k\to+\infty}\frac{C}{k}\lambda(\e_k)=0.
\end{split}
\end{equation*}
This shows the validity of \eqref{III_k_1}, and concludes the proof of \eqref{III_k} about $I\!I\!I_k$.

Now, we consider the integral in $I\!V_k$. Using the expression
\[
u_{\e_k}(x)\psi_k(x)-u_{\e_k}(y)\psi_k(y)=u_{\e_k}(x)\left(\psi_k(x)-\psi_k(y)\right)+\psi_k(y)\left(u_{\e_k}(x)-u_{\e_k}(y)\right),
\]
we write $I\!V_k$ as
\begin{equation}\label{IV_k_1}
\begin{split}
&I\!V_k=\lambda(\e_k)\iint_{(B_{R}\setminus B_{R-\rho_k})^2}\frac{\left(1-\psi^2_k(y)\right)\left(u_{\e_k}(x)-u_{\e_k}(y)\right)^2}{\abs{x-y}^{n+2s}}\,dx\,dy
\\
&-\lambda(\e_k)\iint_{(B_{R}\setminus B_{R-\rho_k})^2}\frac{u^2_{\e_k}(x)\left(\psi_k(x)-\psi_k(y)	\right)^2}{\abs{x-y}^{n+2s}}\,dx\,dy
\\
&-\lambda(\e_k)\iint_{(B_{R}\setminus B_{R-\rho_k})^2}\frac{2u_{\e_k}(x)\psi_k(y)\left(u_{\e_k}(x)-u_{\e_k}(y)\right)\left(\psi_k(x)-\psi_k(y)\right)}{\abs{x-y}^{n+2s}}\,dx\,dy
\end{split}
\end{equation}
Since $\abs{\psi_k(y)}\leq1$, the first integral in the right-hand side of~\eqref{IV_k_1} is nonnegative. To control the second term, we use that $(\psi_k(x)-\psi_k(y))^2\leq C\abs{x-y}^2/\rho_k^2$, and write
\begin{equation*}
\begin{split}
&\iint_{(B_{R}\setminus B_{R-\rho_k})^2}\frac{\left(\psi_k(x)-\psi_k(y)	\right)^2}{\abs{x-y}^{n+2s}}\,dx\,dy
\\
&\hspace{2cm}
\leq C\rho_k^{-2}\iint_{(B_{R}\setminus B_{R-\rho_k})^2}\frac{1}{\abs{x-y}^{n+2s-2}}\,dx\,dy
=:\widetilde{\nu}_k\in(0,+\infty).
\end{split}
\end{equation*}
Hence, choosing $\e_k$ so small that $\lambda(\e_k)\leq (k\widetilde{\nu}_k)^{-1}$, it follows that
\begin{equation}\label{IV_k_2}
\lim_{k\to+\infty}\lambda(\e_k)\iint_{(B_{R}\setminus B_{R-\rho_k})^2}\frac{\left(\psi_k(x)-\psi_k(y)	\right)^2}{\abs{x-y}^{n+2s}}\,dx\,dy
\leq\lim_{k\to+\infty}\frac{C}{k}
=0.
\end{equation}
In the last integral in~\eqref{IV_k_1}, we exploit that $u_{\e_k}(x)\psi_k(y)$ is uniformly bounded and we use the Cauchy-Schwarz inequality to write
\begin{equation*}
\begin{split}
&\hspace{-1cm}\lambda(\e_k)\iint_{(B_{R}\setminus B_{R-\rho_k})^2}\frac{\left(u_{\e_k}(x)-u_{\e_k}(y)\right)\left(\psi_k(x)-\psi_k(y)\right)}{\abs{x-y}^{n+2s}}\,dx\,dy
\\
&\leq\left(\lambda(\e_k)\iint_{(B_{R}\setminus B_{R-\rho_k})^2}\frac{\left(u_{\e_k}(x)-u_{\e_k}(y)\right)^2}{\abs{x-y}^{n+2s}}\,dx\,dy\right)^\frac12
\\
&\hspace{2cm}\times\left(\lambda(\e_k)\iint_{(B_{R}\setminus B_{R-\rho_k})^2}\frac{\left(\psi_k(x)-\psi_k(y)\right)^2}{\abs{x-y}^{n+2s}}\,dx\,dy\right)^\frac12
\\
&\leq\left(\lambda(\e_k)\iint_{(B_{R}\setminus B_{R-\rho_k})^2}\frac{\left(\psi_k(x)-\psi_k(y)\right)^2}{\abs{x-y}^{n+2s}}\,dx\,dy\right)^\frac12\left(\mathcal{F}(u)+\frac1k\right)^\frac12.
\end{split}
\end{equation*}
Recalling~\eqref{FINITE} and~\eqref{IV_k_2}, we thereby see that
\begin{equation}\label{IV_k_3}
\lim_{k\to+\infty} \lambda(\e_k)\iint_{(B_{R}\setminus B_{R-\rho_k})^2}\frac{\left(u_{\e_k}(x)-u_{\e_k}(y)\right)\left(\psi_k(x)-\psi_k(y)\right)}{\abs{x-y}^{n+2s}}\,dx\,dy
=0.
\end{equation}
Now, putting together \eqref{IV_k_2}, \eqref{IV_k_3}, and the fact that the first integral in the right hand side of \eqref{IV_k_1} is positive, we deduce that
\begin{equation}\label{IV_k}
\limsup_{k\to\infty}I\!V_k\geq0.
\end{equation}

Finally, from~\eqref{I_k_limit}, \eqref{II_k_limit}, \eqref{III_k} and \eqref{IV_k}
we deduce the desired claim in~\eqref{5_claim}.
Now, in light of~\eqref{F_tilde}, \eqref{SV_limsup}, \eqref{5_56}, and \eqref{5_claim},
we have that
\[
\limsup_{k\to\infty}\mathcal{F}_k(u^*_k)=\limsup_{k\to\infty}\widetilde{\mathcal{F}}_{\e_k}(u^*_k)\leq\limsup_{k\to\infty}\overline{C}_{s}\,\mathcal{G}_{\e_k}(u_{\e_k},B_{R})\leq\mathcal{F}(u),
\]
that is the claim in~(ii) of Theorem \ref{thm_gammaconvergence}.

This completes the proof of Theorem \ref{thm_gammaconvergence} for $s\in[1/2,1)$.
For completeness, we observe that the constant $c_\#$ appearing in the $\Gamma$-limit \eqref{local_limit} can be written as 
\[c_\#=\overline{C}_s\,c_*, \]
where $\overline{C}_s$ is defined in \eqref{constant} and $c_*$ is the constant appearing in \eqref{SV_gamma_limit}, which in turn
is related to the $\Gamma$-limit functional in \cite{SV} for $s\geq1/2$.
\end{proof}

\section{$\Gamma$-convergence for $ s\in(0,1/2) $}\label{sec_nonlocal}
This section is focused on the $\Gamma$-convergence for the case $s\in(0,1/2)$. First, we prove Theorem \ref{thm_gammaconvergence} in this case, and then we prove Proposition~\ref{prop_asympt}.

\begin{proof}[Proof of Theorem~\ref{thm_gammaconvergence} for $\mathit{s\in(0,1/2)}$.]
	We consider any $u\in X$ and we start by proving the claim in (i), which is the liminf inequality for every sequence $u_\e$ converging to~$u$ in~$X$. 
	Let $u_\e$ be a sequence of functions in $X$ that converges to $u$ in $L^1(\R^n)$. If
	\[
	\liminf_{\e\to0^+}\mathcal{F}_\e(u)=+\infty,
	\]
	then \eqref{liminf} is obvious. Hence, we assume that
	\[
	\liminf_{\e\to0^+}\mathcal{F}_\e(u)=l<\infty.
	\]
	We take $(u_{\e_k})_k$ as a subsequence of $(u_{\e})$ that attains the limit $l$, and $(u_{\e_{k_j}})_j$ as a
	subsequence that converges to $u$ almost everywhere. Then,
	\[
	l=\liminf_{k\to\infty}\mathcal{F}_{\e_k}(u_{\e_k})=\lim_{j\to\infty}\mathcal{F}_{\e_{k_j}}(u_{\e_{k_j}})\geq\lim_{j\to\infty}\frac{1}{\e_{k_j}^{2s}}\int_{\R^n}W(u_{\e_{k_j}})\,dx.
	\]
	Therefore,
	\[
	\int_{\R^n} W(u)\,dx=\lim_{j\to+\infty}\int_{\R^n} W(u_{\e_{k_j}})\,dx=0,
	\]
	and $ u(x)\in\{0;1\} $ almost everywhere. Thus, we deduce that $ u=\chi_E $ for some set $ E\subset\R $. Using Fatou's lemma and the definition of $\mathcal{F}(u)$ in~\eqref{nonlocal_limit}, we can conclude that
	\[
	\liminf_{\e\to0^+}\mathcal{F}_\e(u_\e)\geq\liminf_{\e\to0^+}\int_{\R^n}S_s(\xi)\abs{\widehat{u}_\e(\xi)}^2\,d\xi\geq\int_{\R^n}S_s(\xi)\abs{\widehat{u}(\xi)}^2\,d\xi=\mathcal{F}(u).
	\]
	This completes the proof of the claim in~(i).
	
	Now, we prove the claim in (ii). We assume that~$ u=\chi_E $ for some set $ E\subset\R^n $ --- otherwise \eqref{limsup} is obvious --- and we define the constant sequence~$u_\e:=u$. 
	
	Since $\mathcal{F}_\e(u)$ is defined for $u=\chi_E$ as
	\[
	\mathcal{F}_\e(u)=\mathcal{F}(u)=\int_{\R^n} S_s(\xi)\abs{\widehat{u}(\xi)}^2\,d\xi,
	\]
	then we trivially have \eqref{limsup} for the constant sequence $(u_\e)_\e$.
\end{proof}

%Now that we proved the $ \Gamma $-convergence result of Theorem~\ref{thm_gammaconvergence} when $ s\in(0,1/2) $, we prove Proposition~\ref{prop_asympt} to obtain some information about the limit functional $ \mathcal{F} $ defined in~\eqref{nonlocal_limit}. 
Now, we prove Proposition \ref{prop_asympt}. This result gives important information about the limit functional $ \mathcal{F} $ defined in~\eqref{nonlocal_limit} for $s\in(0,1/2)$ in the case $n=1$, showing that it interpolates the classical and the nonlocal perimeter.

\begin{proof}[Proof of Proposition~\ref{prop_asympt}]
We recall that the function $ \mathcal{T}_s(r):[0,+\infty)\longrightarrow[0,+\infty) $ is defined as
\begin{equation}\label{T_s}
\mathcal{T}_s(r):=\mathcal{F}(\chi_{I_r})=\int_\R S_s(\xi)\abs{\widehat{\chi_{I_r}}(\xi)}^2\,d\xi,
\end{equation}	
and the squared modulus of the Fourier transform of~$\chi_{I_r}$~is
\[
\abs{\widehat{\chi_{I_r}}(\xi)}^2=\frac{4\sin^2(r\xi)}{\xi^2}.
\]
This last computation is done in detail in Lemma~\ref{lemma_onebump} in the appendix. Since the squared modulus of $\widehat{\chi_{I_r}}$ depends only on the length of the interval, then $ \mathcal{F}(\chi_{I_r}) $ only depends on $r$ and $\mathcal{T}_s$ is a well-defined function of $r\in[0,+\infty)$. 

Plugging the expression of $\abs{\widehat{\chi_{I_r}}(\xi)}^2$ in~\eqref{T_s}, we have
\begin{equation}\label{T_s_1}
\mathcal{T}_s(r)=4\int_\R \widetilde{S}_s(\xi) \frac{\sin^2(r\xi)}{\abs{\xi}^{2-2s}}\,d\xi,
\end{equation}
where $\widetilde{S}_s(\xi)$ is defined in~\eqref{S_s_tilde}.

We want to show the asymptotic behavior of $\mathcal{T}_s$ at zero,
as stated in~\eqref{asynt_0}. To this end, we change variable $ r\xi=\eta $ in~\eqref{T_s_1} and we get the following expression for~$\mathcal{T}_s(r)$
\begin{equation}\label{T_s_change}
\mathcal{T}_s(r) = 4r^{1-2s}\int_\R \widetilde{S}_s\left(\frac{\eta}{r}\right)\frac{\sin^2(\eta)}{\abs{\eta}^{2-2s}}\,d\eta.
\end{equation}

{F}rom the dominated convergence theorem and the fact that $\sin^2(\eta)/\abs{\eta}^{2-2s}$ is integrable in $ \R $ when $ s\in(0,1/2) $, we deduce that
\[
\lim_{r\to0}\int_\R \widetilde{S}_s\left(\frac{\eta}{r}\right)\frac{\sin^2(\eta)}{\abs{\eta}^{2-2s}}\,d\eta=C_1,
\]
where $C_1$ is a positive constant depending only on $s$. Thus, from this bound and~\eqref{T_s_change} we obtain~\eqref{asynt_0}, as desired.

	Now, we want to prove~\eqref{asynt_infty}, which describes the asymptotic behavior of $\mathcal{T}_s$ at infinity. We use the expression
	in~\eqref{T_s_1} for $\mathcal{T}_s(r)$ and Lemma~\ref{limit_sin} to write
	\begin{equation}\label{432}
	\lim_{r\to\infty}\mathcal{T}_s(r)=\lim_{r\to\infty}4\int_\R\widetilde{S}_s(\xi) \frac{\sin^2(r\xi)}{\abs{\xi}^{2-2s}}\,d\xi=2\int_{\R}\frac{\widetilde{S}_s(\xi)}{\abs{\xi}^{2-2s}}\,d\xi.
	\end{equation}
	The function in the last integral is controlled by a constant near the origin --- see~\eqref{S_s_tilde} and \eqref{propJ} --- and by $C/\abs{\xi}^{2-2s}$ far from the origin, which is an integrable function at infinity, since $s\in(0,1/2)$. 
	
	Therefore, the last integral in~\eqref{432} is finite and this proves
	the desired claim in~\eqref{asynt_infty}. The proof of Proposition~\ref{prop_asympt} is thereby complete.
	\end{proof}

\begin{appendices}
	\section{Appendix}
	For the sake of completeness,
	we collect here two simple technical lemmata. Let us start with a very standard computation, that is the Fourier transform of the characteristic function of one interval.
	\begin{lemma}\label{lemma_onebump}
		Let $ u(x):\R\to[0,1] $ be defined as $ u(x)=\chi_I $, where $I$ is a finite interval of $\R$, i.e.\,$ I=(a_1,a_2)\subset\R $. Then, 
		\[
		\abs{\widehat{u}(\xi)}^2=4\frac{\sin^2\left(r\xi\right)}{\xi^2},
		\]
		where $ r=\frac{a_2-a_1}{2} $ is the width of the intervals $ I $.
	\end{lemma}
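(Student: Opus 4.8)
The plan is to compute $\widehat{u}(\xi)$ explicitly from the definition of the Fourier transform and then take the squared modulus. First I would recenter the interval: writing $c:=\frac{a_1+a_2}{2}$ for its midpoint and $r=\frac{a_2-a_1}{2}$ for its half-width, so that $I=(c-r,c+r)$, the substitution $x=c+t$ gives
\[
\widehat{u}(\xi)=\int_{a_1}^{a_2}e^{-ix\xi}\,dx=e^{-ic\xi}\int_{-r}^{r}e^{-it\xi}\,dt.
\]

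Next I would evaluate the remaining elementary integral. Splitting $e^{-it\xi}=\cos(t\xi)-i\sin(t\xi)$, the odd part integrates to zero over $(-r,r)$ and the even part gives $\int_{-r}^{r}\cos(t\xi)\,dt=\frac{2\sin(r\xi)}{\xi}$ (one may equally just antidifferentiate). Hence
\[
\widehat{u}(\xi)=e^{-ic\xi}\,\frac{2\sin(r\xi)}{\xi}.
\]

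Finally, since $\abs{e^{-ic\xi}}=1$, taking the squared modulus yields
\[
\abs{\widehat{u}(\xi)}^2=\frac{4\sin^2(r\xi)}{\xi^2},
\]
which is exactly the claim. The only point worth a brief remark is that the phase factor $e^{-ic\xi}$ absorbs all the dependence on the location of $I$, so that $\abs{\widehat{u}}^2$ depends on $I$ only through its length — this is the statement used in the proof of Proposition~\ref{prop_asympt}. There is no real obstacle: the lemma is a one-line computation, and the recentering is just a convenience to make the cancellation of the odd part transparent.
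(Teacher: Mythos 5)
Your proof is correct and takes essentially the same route as the paper: a direct computation of $\widehat{\chi_I}(\xi)=\int_{a_1}^{a_2}e^{-ix\xi}\,dx$ from the definition, followed by taking the squared modulus. The only difference is cosmetic --- you recentre the interval so that the location dependence is isolated in the unit-modulus phase $e^{-ic\xi}$, whereas the paper keeps the endpoints $a_1,a_2$ and simplifies via $\cos(a_2\xi)\cos(a_1\xi)+\sin(a_2\xi)\sin(a_1\xi)=\cos((a_2-a_1)\xi)$ and $1-\cos\theta=2\sin^2(\theta/2)$; both yield the same formula.
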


	\begin{proof}
	First, we compute the Fourier transform of the function $ u $.	
		\begin{equation*}
		\begin{split}
		&\widehat{u}(\xi)=\int_{a_1}^{a_2}e^{-ix\xi}\,dx=\frac{i}{\xi}\left(e^{-ia_2\xi}-e^{-ia_1\xi}\right) 
		\\
		&\hspace{3cm}= \frac{1}{\xi}\left\{\sin(a_2\xi)-\sin(a_1\xi)+i\left(\cos(a_2\xi)-\cos(a_1\xi)\right)	\right\}.
		\end{split}
		\end{equation*}
		Then, we compute its square modulus.
		\begin{equation*}
		\begin{split}
		&\abs{\widehat{u}(\xi)}^2=\frac{1}{\xi^2}\left\{2-2\left(\sin(a_2\xi)\sin(a_1\xi)+\cos(a_2\xi)\cos(a_1\xi)\right)\right\}	
		\\
		&\hspace{3cm}=\frac{1}{\xi^2}\left\{2-2\cos\left((a_2-a_1)\xi\right)\right\}=\frac{4}{\xi^2}\sin^2\left(\frac{a_2-a_1}{2}\xi\right),
		\end{split}
		\end{equation*}
		and this concludes the proof of Lemma \ref{lemma_onebump}.
	\end{proof}
	
	We prove now a convergence result that we use in Section~\ref{sec_nonlocal}.
	\begin{lemma}\label{limit_sin}
		If $ f\in L^1(\R) $, then
		\[
		\lim_{\omega\to+\infty}\int_\R f(\eta)\sin^2(\omega\eta)\,d\eta=\frac12\int_\R f(\eta)\,d\eta.
		\]
	\end{lemma}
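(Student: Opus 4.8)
The plan is to linearize the square of the sine via the elementary identity $\sin^2 t=\tfrac12\bigl(1-\cos(2t)\bigr)$, which at once splits the integral into a term that does not depend on $\omega$ and an oscillatory integral of an $L^1$ function.

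Concretely, I would write $\sin^2(\omega\eta)=\tfrac12-\tfrac12\cos(2\omega\eta)$, so that
\[
\int_\R f(\eta)\sin^2(\omega\eta)\,d\eta=\frac12\int_\R f(\eta)\,d\eta-\frac12\int_\R f(\eta)\cos(2\omega\eta)\,d\eta.
\]
The first term on the right-hand side is exactly the claimed limit, hence the statement reduces to showing that the last integral tends to $0$ as $\omega\to+\infty$. Writing $f(\eta)\cos(2\omega\eta)=\tfrac12 f(\eta)\bigl(e^{2i\omega\eta}+e^{-2i\omega\eta}\bigr)$, this integral is a combination of the values $\widehat f(\mp 2\omega)$ of the Fourier transform of $f$, so its vanishing is precisely the content of the Riemann--Lebesgue lemma, valid for every $f\in L^1(\R)$.

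If one prefers not to invoke the Riemann--Lebesgue lemma directly, the same conclusion follows from a density argument: given $\delta>0$, choose a step function $g$ (a finite linear combination of characteristic functions of bounded intervals) with $\|f-g\|_{L^1(\R)}\le\delta$; then $\int_\R g(\eta)\cos(2\omega\eta)\,d\eta$ can be evaluated explicitly and is $O(1/\omega)$, while $\bigl|\int_\R(f-g)(\eta)\cos(2\omega\eta)\,d\eta\bigr|\le\delta$ uniformly in $\omega$, so letting first $\omega\to+\infty$ and then $\delta\to0$ gives the result. In any case there is no genuine difficulty here; the only point worth stressing is that the integrability hypothesis $f\in L^1(\R)$ is exactly what is used, both to make $\int_\R f$ finite and to run either argument, which is why in Section~\ref{sec_nonlocal} the lemma is applied with $f(\eta)=\widetilde{S}_s(\eta)\,|\eta|^{2s-2}$, a function that belongs to $L^1(\R)$ precisely for $s\in(0,1/2)$.
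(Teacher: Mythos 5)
Your argument is correct, but it follows a different route from the paper. You linearize via $\sin^2 t=\tfrac12\bigl(1-\cos(2t)\bigr)$ and reduce the claim to the vanishing of $\int_\R f(\eta)\cos(2\omega\eta)\,d\eta$, i.e.\ to the Riemann--Lebesgue lemma (or, in your alternative, to an explicit $O(1/\omega)$ computation for step functions plus an $L^1$-density argument). The paper instead never invokes Riemann--Lebesgue: it writes $\int_\R f=\int_\R f\sin^2(\omega\eta)\,d\eta+\int_\R f\cos^2(\omega\eta)\,d\eta$ and shows, for $f\in C^1_c(\R)$, that the two oscillatory integrals have the same limit by the half-period shift $\omega\eta=\omega\theta-\pi/2$, which turns $\cos^2$ into $\sin^2$ up to an error $\int_\R\{f(\theta-\tfrac{\pi}{2\omega})-f(\theta)\}\sin^2(\omega\theta)\,d\theta$ that vanishes as $\omega\to+\infty$; the general case then follows by density of $C^1_c(\R)$ in $L^1(\R)$. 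The two proofs are of comparable length and both hinge on the same structural facts (explicit control for a dense class of nice functions plus a uniform $L^1$ estimate for the remainder); yours has the advantage of quoting a standard named result and being slightly more direct, while the paper's is self-contained and avoids Fourier-analytic machinery, exploiting only the translation trick. Your closing observation about the application with $f(\eta)=\widetilde{S}_s(\eta)\abs{\eta}^{2s-2}\in L^1(\R)$ for $s\in(0,1/2)$ matches exactly how the lemma is used in the proof of Proposition~\ref{prop_asympt}.
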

	\begin{proof}
		Let us assume first that $f\in C^1_c(\R)$. We start from the identity
		\[
		\int_{\R}f(\eta)\,d\eta=\int_{\R}f(\eta)\sin^2\left(\omega\eta\right)\,d\eta+\int_{\R}f(\eta)\cos^2\left(\omega\eta\right)\,d\eta,
		\]
		and we want to show that
		\begin{equation}\label{79}
		\lim_{\omega\to+\infty}\int_{\R}f(\eta)\cos^2\left(\omega\eta\right)\,d\eta=\lim_{\omega\to+\infty}\int_{\R}f(\eta)\sin^2\left(\omega\eta\right)\,d\eta.
		\end{equation}
		We remark indeed that the claim in Lemma \ref{limit_sin} follows once we
		establish~\eqref{79}. In order to prove~\eqref{79}, we change variables $ \omega\eta=\omega\theta-\pi/2 $ and we obtain
		\begin{equation}\label{489}
			\begin{split}
			&\int_{\R}f(\eta)\cos^2\left(\omega\eta\right)\,d\eta=
			\\
			&\hspace{1.5cm}\int_{\R}f(\theta)\sin^2\left(\omega\theta\right)\,d\theta+\int_{\R}\left\{f\left(\theta-\frac{\pi}{2\omega}\right)-f(\theta)\right\}\sin^2\left(\omega\theta\right)\,d\theta.
			\end{split}
		\end{equation}
		
		Taking the limits as $ \omega\to+\infty $ in~\eqref{489}, the last term goes to zero thanks to the Vitali convergence theorem and we obtain~\eqref{79} if $f\in C^1_c(\R)$. In general, when $ f\in L^1(\R) $, the result follows from the density of $ C^1_c(\R) $ in $L^1(\R)$.
	\end{proof}

	\end{appendices}

\vspace{1mm}
\section*{Acknowledgment}
The authors would like to thank Xavier Cabré and Matteo Cozzi for very interesting and useful discussions
on the topic of this paper.
\vspace{1mm}

\vfill	

\begin{thebibliography}{99}
	\bibitem{ABS1}
	{\sc Alberti, G.; Bouchitt\'{e}, G.; Seppecher, P.,}
	Un r\'{e}sultat de perturbations singuli\`eres avec la norme $H^{1/2}$,
		%language={French, with English and French summaries},
	{\it C. R. Acad. Sci. Paris S\'{e}r. I Math.}, {\bf 319} (1994) 4, 333-338.
	
	\bibitem{ABS}
	{\sc Alberti, G.; Bouchitt\'{e}, G.; Seppecher, P.,}
	Phase transition with the line-tension effect,
	{\it Arch. Rational Mech. Anal.}, {\bf 144} (1998) 1, 1-46.
	
	\bibitem{ADM}
	{\sc Ambrosio, L; De Philippis, G; Martinazzi, L.,}
	Gamma-convergence of nonlocal perimeter functionals,
	{\it Manuscripta Math.}, {\bf 134} (2011) 3-4, 377-403.
	
	\bibitem{B}
	{\sc Braides, A.,}
	$\Gamma$-convergence for beginners,
	{\it Oxford Lecture Series in Mathematics and its Applications}, {\bf 22} (2002).
	
	\bibitem{BV}
	{\sc Bucur, C.; Valdinoci, E.,}
	Nonlocal diffusion and applications,
	{\it Lecture Notes of the Unione Matematica Italiana}, {\bf 20}, Springer, [Cham]; Unione Matematica Italiana, Bologna (2016).

	\bibitem{CY}
	{\sc Cabr\'{e}, X.; Sire, Y.,}
	Nonlinear equations for fractional Laplacians, I: Regularity,
	maximum principles, and Hamiltonian estimates,
	{\it Ann. Inst. H. Poincar\'{e} Anal. Non Lin\'{e}aire}, {\bf 31}  (2014) 1, 23-53.

	\bibitem{CRS}
	{\sc Caffarelli, L.; Roquejoffre, J.-M.; Savin, O.,}
	Nonlocal minimal surfaces
	{\it Comm. Pure Appl. Math.}, {\bf 63} (2010) 9, 1111-1144.
	
	\bibitem{CS}
	{\sc Caffarelli, L.; Silvestre, L.,}
	An extension problem related to the fractional Laplacian,
	{\it Comm. Partial Differential Equations}, {\bf 32} (2007) 7-9, 1245-1260.
	
	\bibitem{SOUG}
	{\sc Caffarelli, L. A.; Souganidis, P. E.,}
	Convergence of nonlocal threshold dynamics approximations to front propagation. 
	{\it Arch. Ration. Mech. Anal.}, {\bf 195} (2010), no. 1, 1-23.
	
	\bibitem{CMV}
	{\sc Cinti, E.; Miraglio, P.; Valdinoci, E.,}
	One-dimensional symmetry for the solutions of a three-dimensional water wave problem,
	to appear in {\it J. Geom. Anal.}, preprint: arXiv:1710.01137v2. 
	
	\bibitem{CF}
	{\sc Cozzi, M.; Figalli, A.,}
	Regularity  theory  for  local  and  nonlocal  minimal  surfaces:an  overview,
	{\it Nonlocal and nonlinear diffusions and interactions: new methods and directions}, 117–158, Lecture Notes in Math., Vol. 2186, Fond. CIME/CIME Found. Subser., Springer, Cham (2017).
	
	\bibitem{DG}
	{\sc De Giorgi, E.,}
	Sulla convergenza di alcune successioni d’integrali del tipo dell’area, 
	Collection of articles dedicated to Mauro Picone on the	occasion of his ninetieth birthday, 
	{\it Rend. Mat.~(6)} {\bf 8} (1975), 277–294.
	
	\bibitem{DGF}
	{\sc De Giorgi, E.; Franzoni, T.,}
	Su un tipo di convergenza variazionale, 
	{\it Atti Accad. Naz. Lincei Rend. Cl. Sci. Fis. Mat. Natur.~(8)} {\bf 58} (1975) 6,
	842–850.

	\bibitem{DllV}
	{\sc de la Llave, R.; Valdinoci, E.,}
	Symmetry for a Dirichlet-Neumann problem arising in water waves,
	{\it Math. Res. Lett.}, {\bf 16} (2009) 5, 909-918.

	\bibitem{H}
	{\sc Di Nezza, E.; Palatucci, G.; Valdinoci, E.,}
	Hitchhiker's guide to the fractional Sobolev spaces,
	{\it Bull. Sci. Math.}, {\bf 136} (2012) 5, 521-573.
	
	\bibitem{DM}
	{\sc Dal Maso, G.,}
	An introduction to $\Gamma$-convergence,
	{\it Progress in Nonlinear Differential Equations and their Applications}, {\bf 8}, Birkhäuser
	Boston, Inc., Boston, MA (1993).

	\bibitem{DMV}
	{\sc Dipierro, S.; Miraglio, P.; Valdinoci, E.,}
	Symmetry results for the solutions of a partial differential equation arising in water waves, to appear in {\it 2019 MATRIX Annals},
	preprint: arXiv:1901.03581.
	
	\bibitem{DV}
	{\sc Dipierro, S.; Valdinoci, E.,}
	Nonlocal minimal surfaces:  interior regularity, quantitative estimates and  boundary  stickiness.  
	{\it In: Recent  developments  in  nonlocal  theory},  165–209,  De  Gruyter, Berlin (2018).

	\bibitem{E}
	{\sc Evans, L. C.,}
	Partial differential equations,
	{\it Graduate Studies in Mathematics}, {\bf 19}, American Mathematical Society, Providence, RI (1998).

	\bibitem{FKS}
	{\sc Fabes, E. B.; Kenig, C. E.; Serapioni, R. P.,}
	The local regularity of solutions of degenerate elliptic equations,
	{\it Comm. Partial Differential Equations}, {\bf 7} (1982) 1, 77-116.
	
	\bibitem{G}
	{\sc Giusti, E.,}
	Minimal Surfaces and Functions of Bounded Variation,
	{\it Monographs in Mathematics, {\bf 80}, Birk\"{a}huser} (1984).
	
	\bibitem{Gon}
	{\sc Gonz\'{a}lez, M.d.M.,}
	Gamma convergence of an energy functional related to the fractional Laplacian,
	{\it Calc. Var. Partial Differential Equations}, {\bf 36} (2009) 2, 173-210.
	
	\bibitem{HKM}
	{\sc Heinonen, J.; Kilpel\"{a}inen, T.; Martio, O.,}
	Nonlinear potential theory of degenerate elliptic equations,
	{\it Dover Publications, Inc., Mineola, NY}, Unabridged republication of the 1993 original, (2006).
	
	\bibitem{Luca}
	{\sc Lombardini, L.,}
	Fractional  perimeter  and  nonlocal  minimal  surfaces,  {\it Master’s  Degree  Thesis}, available at arXiv:1508.06241, 2015.
	
	\bibitem{M}
	{\sc Modica, L.,}
	The gradient theory of phase transitions and the minimal interface criterion,
	{\it Arch. Rational Mech. Anal.}, {\bf 98} (1987) 2, 123-142.

	\bibitem{SV}
	{\sc Savin, O.; Valdinoci, E.,}
	$\Gamma$-convergence for nonlocal phase transitions,
	{\it Ann. Inst. H. Poincar\'{e} Anal. Non Lin\'{e}aire}, {\bf 29} (2012) 4, 479-500.
	
	\bibitem{SVdue}
	 {\sc Savin, O.; Valdinoci, E.,}
	 Density estimates for a variational model driven by the Gagliardo norm. 
	 {\it J. Math. Pures Appl.} (9), {\bf 101} (2014), no. 1, 1-26.
	\end{thebibliography}
\end{document}